\newcommand{\pa}{\partial}
\newcommand{\R}{\mathbb{R}}
\newcommand{\RR}{{\mathbb R}}
\newcommand{\la}{\lambda}
\newcommand{\Tan}{{\mathrm{Tan}}\,}
\newcommand{\Nor}{{\mathrm{Nor}}\,}
\newcommand{\vare}{\varepsilon}
\newtheorem{theorem}{Theorem}[section]
\newtheorem{definition}[theorem]{Definition}
\newtheorem{lemma}[theorem]{Lemma}
\newtheorem{rem}[theorem]{Remark}
\newtheorem{proposition}[theorem]{Proposition}
\newtheorem{example}{Example}
\newtheorem{corollary}[theorem]{Corollary}
\newcommand{\dist}{\mbox{\rm dist}}
\newcommand{\Ga}{\Gamma}
\newcommand{\Om}{\Omega}
\newcommand{\co}{{\mathrm{co}}}
\newcommand{\inte}{{\mathrm{int}}}
\newcommand{\cl}{{\mathrm{cl}}\,}
\renewcommand{\span}{{\mathrm{span}}}
\newcommand{\rev}[1]{#1}
\journal{Advances in Applied Mathematics}
\begin{document}

\begin{frontmatter}

%% Title, authors and addresses

%% use the tnoteref command within \title for footnotes;
%% use the tnotetext command for theassociated footnote;
%% use the fnref command within \author or \affiliation for footnotes;
%% use the fntext command for theassociated footnote;
%% use the corref command within \author for corresponding author footnotes;
%% use the cortext command for theassociated footnote;
%% use the ead command for the email address,
%% and the form \ead[url] for the home page:
%% \title{Title\tnoteref{label1}}
%% \tnotetext[label1]{}
%% \author{Name\corref{cor1}\fnref{label2}}
%% \ead{email address}
%% \ead[url]{home page}
%% \fntext[label2]{}
%% \cortext[cor1]{}
%% \affiliation{organization={},
%%             addressline={},
%%             city={},
%%             postcode={},
%%             state={},
%%             country={}}
%% \fntext[label3]{}

\title{R-hulloid of the vertices of a tetrahedron}

%% use optional labels to link authors explicitly to addresses:
%% \author[label1,label2]{}
%% \affiliation[label1]{organization={},
%%             addressline={},
%%             city={},
%%             postcode={},
%%             state={},
%%             country={}}
%%
%% \affiliation[label2]{organization={},
%%             addressline={},
%%             city={},
%%             postcode={},
%%             state={},
%%             country={}}

\author[unifi]{Marco Longinetti}
\affiliation[unifi]{organization={DIMAI, Università degli Studi di Firenze}, addressline={V.le Morgagni 67/a}, city={Firenze}, postcode={50138}, state={}, country={Italy}}

\author[limoges]{Simone Naldi}
\affiliation[limoges]{organization={Univ. of Limoges, CNRS, XLIM, UMR 7252}, addressline={123 Av. Thomas}, city={Limoges}, postcode={87060}, country={France}}
%\affiliation[sorbonne]{organization={Lip6, Sorbonne University}, addressline={4 Place Jussieu}, city={Paris}, postcode={F-75005}, country={France}}

\author[unifi2]{Adriana Venturi}
\affiliation[unifi2]{organization={Università degli Studi di Firenze}, addressline={V.le Morgagni 67/a}, city={Florence}, postcode={50138}, state={}, country={Italy}}

%% Abstract
\begin{abstract}
%% Text of abstract
The $R$-hulloid, in the Euclidean space $\R^3$, of the set of vertices $V$ of a tetrahedron $T$ is the minimal closed set containing $V$ such that its complement is \rev{the union} of open balls of radius $R$. When $R$ is greater than the circumradius of $T$, the boundary of the $R$-hulloid consists of $V$ and possibly of four spherical subsets of well defined spheres of radius $R$ through the vertices of $T$. The existence of a value $R^*$ such that these subsets collapse into a point $O^*$, in the interior of $T$, is investigated; in such a case $O^*$ belongs to four spheres of radius  $R^*$, each one through three vertices of $T$ and not containing the fourth one. As a consequence, the range of $\rho$ such that $V$ is a $\rho$-body is described completely. This work generalizes to dimension three previous results, proved in the planar case and related to the three circles Johnson's Theorem.
\end{abstract}

\begin{keyword}
  generalized convexity, tetrahedron, support cone. \\

  {\it Mathematics Subject Classification (MSC2020):} 52A01, 52A30.
\end{keyword}

\end{frontmatter}

%% Add \usepackage{lineno} before \begin{document} and uncomment 
%% following line to enable line numbers
%% \linenumbers

%% main text
%%

\section{Introduction}  

Let $R$ be a positive real number. The $R$-hulloid of a closed set $E$ in the Euclidean space $\R^d$,
denoted by $\co_R(E)$, is the minimal closed set containing $E$ such that its complement
$(\co_R(E))^c$ is \rev{the union} of open balls of radius $R$. More generally, if the complement of a body ({\it i.e.},
non empty closed set) $E$ is \rev{the union} of open balls of radius $R$, then $E$ is called an $R$-body. This concept
is a natural generalization of the convex hull $\co(E)$ of a set $E$, in which the role of closed
half-spaces is played by the closed complement of balls of equal radius.

%In other words, $\co_R(E)$ is the intersection of all $R$-bodies containing $E$ (indeed, the family of
%$R$-bodies is closed with respect to intersection, see \cite{Per}), and it coincides with $E$ if and
%only if $E$ is itself a $R$-body.

The study of $R$-bodies and of $R$-hulloids of finite sets or more generally, finite unions
of compact sets has been studied before (for instance under the name of $\epsilon$-convex sets \cite{Per});
random approximations by such sets have been considered in set estimation theory \cite{Lop}.
These sets find application in statistics, but also in other areas such as
stereology or computational geometry (see \cite{Cue} and references therein).

There has been much work regarding different generalizations of convexity that involve, instead of
intersections of closed half-spaces, some different family $\mathscr{C}$ of subsets of the Euclidean space.
The case when $\mathscr{C}$ is a family of complements to balls was investigated in \cite{russi}.
The work \cite{Kab} describe the case when $\mathscr{C}$ is a family of affine transformations of a given
convex body. Finite intersections or unions of balls have been considered, \rev{for example,} in the
context of the Kneser-Poulsen conjecture, \rev{see \cite{Bez} and references therein.}

\rev{Finally, when $\mathscr{C}$ is a family of unit balls, one obtains a special subclass of convex bodies,
  the so-called ball-bodies. Ball-bodies have been studied extensively, see, for example, \cite{Art}
  for a recent paper, and \cite{Sch} for results on the class of floating bodies related to ball-bodies.
  A comprehensive survey and a list of references can be found in \cite{Bez}.}
%Finally, when $C$ is a family of unit balls, a special subclass of convex bodies, the ball-bodies are studied in \cite{Art} and in \cite{Sch} for the class of floating bodies.
\\

%, finds applications, among others, in molecular mechanics
%and dynamics simulations in chemistry:
%in this context, $E$ is a finite union of balls representing the atoms of a molecule embedded in a solute,
%and its $R$-hulloid is the complement of the locus where the solute has access (itself modeled as a
%sphere of some radius $R$, allowed to roll around the set $E$); in particular, the boundary of $\co_R(E)$
%is a surface enclosing the forbidden region, called the Solvent Excluded Surface, see
%\cite{lee1971interpretation,quan2016mathematical}.

{\bf Main contributions and outline}.
In this paper we consider the situation where $E \subset \R^d$ is the set of vertices $V$ of a simplex.
The planar setting ($d=2$) is by far more understood than the general $d$-dimensional case. When $E=V$ is
the set of vertices of a triangle $T$, a complete description of $\co_\rho(V)$ was proved in \cite{LMV}
as elementary consequence of the three circles Johnson's Theorem \cite{johnson,johnson2} (we recall
this result from \cite{LMV} in Proposition \ref{cor(v)inplane}). Our main goal is to give a geometric
description of $\co_\rho(V)$, for all $\rho > 0$ in dimension $d\geq 3$ and in particular for $d=3$.

The description of $\co_\rho(V)$ looks similar to the planar case using special $\rho$-supporting spheres
of $V$, of which a precise definition is given in Definition \ref{defRsupporting}. An explicit representation
of $\co_\rho(V)$ is obtained in Theorem \ref{coRdsimplex}. The proof relies on non trivial geometric arguments
that are valid in every dimension $d\geq 3$, where Johnson's Theorem cannot be applied.

The shape of $\co_\rho(V)$ depends on a special value of a radius by $r_L(V)$ defined as the supremum of the
set of $\rho$ for which $V$ is a $\rho$-body. This critical value is defined in (\ref{defrL(V)}) as the
infimum of the set of $\rho$ such that $\co_\rho(V)$ has not empty interior.

For the regular simplex $T = \co(V)$ in dimension $d \geq 2$, of circumradius $r(V)$, it is proved in
\cite[Theorem 5.6]{LMV} that $r_L(V) = \frac{d}{2}r(V)$; in such a case there exist $d+1 $ spheres of radius
$r_L(V)$, supporting $V$ and intersecting at the center of $T$. The definitions and the results of Section
\ref{cased=3} are obtained in any dimensions $d\geq3$; however the aim of the paper is to restrict to the
three dimensional case since there is a connection with a solved conjecture for unit-sphere systems in
$\R^3$, see \cite{HMNT,MT2}. This fact gives us a sharp bound for $r_L(V)$ for well-centered tetrahedra
in Theorem \ref{wellcenteredrL(V)}.

For this reason in Section \ref{4crossingpointtetrahedrum} we restrict the attention to configurations of
four spheres $S_j$ in $\R^3$, of same radius $R^L$, so that $S_j$ contains three vertices $V \setminus \{v_j\}$
of $T$, and such that the spheres intersect at a single point $O^L$. If $O^L$ does not lie on the circumsphere
of $T$, it is called a {\em four crossing point} of $T$; when $R^L=r(V)$ this configuration of spheres including
the circumsphere of $T$ would be a unit-sphere system in $\R^3$ if they are all distinct, Definition
\ref{defunitshoeresystem}, see \cite{HMNT} and Remark \ref{unitspheresystem}.

In Theorem \ref{existpL(V)wellcentered}, it is proved that there exist four special spheres $R^*$-supporting
$V$ and with a common intersection at one point $O^*\in \inte(T)$ if and only if $R^* := r_L(V) > r(V)$; moreover
in this case $\co_{R^*}(V)$ reduces to $V \cup \{O^*\}$. As a consequence of this fact, a complete description
of $\co_\rho(V)$ for all $\rho>0$ is given in Corollary \ref{finale su T well centered} for all tetrahedra $T$.

Interestingly, the fifth point $O^*$ in the hulloid might not coincide with special central points of $T$,
see Remark \ref{remarkO*notparticular}.

A condition for the uniqueness of the four-crossing radius and four crossing point is given in Theorem \ref{uniquenessRLOL}.
Conversely, examples where $R^L$ and $O^L$ are not unique are obtained in \cite{NL} using methods from symbolic computation,
for the family of triangular pyramids, see Remark \ref{remark7points}.

\section{Definitions and preliminaries on R-bodies}

$\R^d$ is the euclidean space of dimension $d\geq 2$. The elements of $\R^d$ are called vectors. The origin of $\R^d$ is denoted by $o = (0,\ldots,0)$. The open ball of center $x \in \R^d$ and radius $r>0$ is denoted by $B(x, r)$, its boundary is the sphere $\pa B(x,r)$; the unit sphere $\pa B(o,1) \subset \R^d$ is often denoted by $S^{d-1}$.
{In case  $R$ is a fixed positive real number, let us simply denote by $B(x)$ the open ball of radius $R$ and center $x$, and by $B$ any open ball of radius $R$}. A body is a closed non empty set. The closure of a set $A$ is $\cl(A)$, its interior is $\inte(A)$ and its complement $A^c$. The usual scalar product between vectors $u,v\in \RR^d$ will be denoted by $\langle u,v\rangle$. A cone $C$, with vertex $o$, is a subset of $\R^d$ with the following property: $\forall x\in C$ and $\forall \la \geq 0$, then $\la x \in C$. A cone $C$ is pointed if $C \cap (-C) = \{o\}$. The apex set of a closed cone $C$ is $C\cap (-C)$ and it contains $o$.

Let $A$ be a body and $q \in A$. The {\em tangent cone} of $A$ at $q$ is defined as:
$$
\Tan(A,q)=\{v \in \RR^d: \forall \varepsilon > 0 \, ,
\exists  x\in A\cap B(q,\vare)\; , \exists r>0 \,\mbox{s.t.} \; |r(x-q)-v|< \vare\}.$$
Let us recall that if  $\Tan(A,q)\neq \{o\}$ then 
$$S^{d-1}\cap \Tan(A,q)= \bigcap_{\vare >0}
cl\left\{\frac{x-q}{|x-q|}:\, x\in A\cap B(q,\vare) \text{ and }  x\neq q\right\}.$$
The {\em dual cone} of a cone $K$ is
$K^\star=\{y\in \RR^d : \langle y,x \rangle \geq 0, \, \forall x \in K\}.$
The {\em normal cone} at
$q$ to $A$ is the non empty closed convex cone, given by:
\begin{equation*}
  \label{defNor}
  \Nor(A,q) := \{u\in\RR^d: \langle u,v\rangle \le 0, \, \forall v \in \Tan(A,q)\} = -\{\Tan (A,q)\}^\star.
\end{equation*}

 \begin{definition}\label{d1}
   A body $A \subset \R^d$ will be called an \emph{$R$-body} if $\forall y \in A^c$ there exists an open
   ball $B  \subset \R^d$ of radius $R$ satisfying $y\in B \subset A^c$, that is if:
   $$A = \bigcap_{B \cap A = \emptyset} B^c.$$
  \end{definition}
 
  \begin{definition}\label{d2}
  Let $ E \subset \R^d$ be a body and let $R$ be a positive real number. The set
  $$
  \co_R(E) :=  \bigcap_{B \cap E = \emptyset} B^c.
  $$
  will be called the \emph{$R$-hulloid} of $ E$. 
 \end{definition}
  If the family of all open balls of radius $R$ not intersecting $E$ is empty, we assume $\co_R(E) = \R^d$.
  The $R$-hulloid always exists and $\co_R(E)$ is the minimal $R$-body  containing $E$. 
  Clearly every convex body $E$ is an $R$-body (for all positive $R$), in such a case $E=\co(E)=\co_R(E)$
  for every $R$.
  A body $A$ is an $R$-body if and only if $A = \co_R(A)$.
  Moreover, see \cite[formula (8)]{Per}:
  \begin{equation}\label{monotonicity}
  R_1 \leq R_2 \Rightarrow \co_{R_1}(E) \subset \co_{R_2}(E).
  \end{equation}

  \begin{proposition}\cite[Thm~3.10]{LMV}\label{r5}
    The closed subsets of a sphere of radius $r$ are $R$-bodies for all $R\leq r$.
  \end{proposition}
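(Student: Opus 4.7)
The plan is to verify Definition~\ref{d1} directly. Let $A$ be a closed subset of some sphere $S := \partial B(x_0, r)$, and fix $R \le r$. Given $y \in A^c$, I aim to exhibit an open ball $B(z, R)$ with $y \in B(z, R)$ and $B(z, R) \cap A = \emptyset$. Because $A \subseteq S$, it suffices to arrange that $B(z, R)$ either misses $S$ entirely, or meets it only inside a prescribed small neighborhood of a point of $S \setminus A$. I would split into three cases according to the position of $y$: (i) $|y - x_0| < r$; (ii) $|y - x_0| > r$; (iii) $y \in S \setminus A$.

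Case (i) is handled by placing $B(z, R)$ inside $B(x_0, r)$, which is disjoint from $S$. If $|y - x_0| \le r - R$, take $z = y$; otherwise take $z$ on the segment $[x_0, y]$ at distance $r - R$ from $x_0$. In both subcases $|z - x_0| + R \le r$, so the triangle inequality gives $B(z, R) \subseteq B(x_0, r)$, and thus $B(z,R)\cap S=\emptyset$. Case (ii) is symmetric: place $z$ on the ray from $x_0$ through $y$ at distance $r + R$ from $x_0$ (or take $z = y$ when $|y - x_0| \ge r + R$), so that $|z - x_0| - R \ge r$ and therefore $B(z, R)$ is disjoint from $\cl B(x_0, r) \supseteq S$.

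The core of the argument is case (iii). Since $A$ is closed and $y \notin A$, choose $\vare > 0$ with $B(y, \vare) \cap A = \emptyset$. For $\delta > 0$, define $z_\delta := x_0 + (r - R + \delta)(y - x_0)/r$, the point on the segment $[x_0, y]$ at distance $r - R + \delta$ from $x_0$, which is well defined since $R \le r$. A direct computation gives $|z_\delta - y| = R - \delta$, hence $y \in B(z_\delta, R)$. I would then show that $B(z_\delta, R) \cap S$ is a spherical cap around $y$ whose diameter tends to $0$ as $\delta \to 0^+$: working in coordinates with $x_0 = 0$ and $y = r e_d$, a point $r\omega \in S$ with $|\omega|=1$ lies in $B(z_\delta, R)$ iff $\omega_d > (r^2 + c^2 - R^2)/(2rc)$ with $c := r - R + \delta$, and this threshold converges to $1$ as $\delta \to 0^+$, reflecting the fact that at $\delta = 0$ the ball $B(z_0, R)$ is internally tangent to $S$ at $y$ with $B(z_0, R) \cap S = \emptyset$. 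For $\delta$ small enough the cap is contained in $B(y, \vare)$, hence disjoint from $A$.

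The main obstacle is the quantitative tangent-perturbation step in case (iii); the other two cases are essentially routine Euclidean geometry. The hypothesis $R \le r$ is used crucially in (iii) to guarantee $c = r - R + \delta > 0$ for small $\delta > 0$ and to ensure the existence of an internally tangent ball of radius $R$ at $y$; if $R > r$, this construction collapses, and indeed proper closed subsets of small spheres need not be $R$-bodies.
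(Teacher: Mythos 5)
The paper only cites this result from \cite[Thm.~3.10]{LMV} and gives no proof, so your direct verification of Definition~\ref{d1} is a reasonable route, and your cases (i) and (ii) are correct. However, there is a genuine gap in case (iii) at the endpoint $R=r$, which the statement explicitly includes and which the paper actually uses (Proposition~\ref{corollarycoR(V)} invokes this result with $R=\rho=r(V)$). Your threshold is $t(\delta)=(r^2+c^2-R^2)/(2rc)$ with $c=r-R+\delta$, and the claim $t(\delta)\to 1$ as $\delta\to 0^+$ holds only when $c\to r-R>0$, i.e.\ only for $R<r$. When $R=r$ one has $c=\delta$ and $t(\delta)=\delta/(2r)\to 0$, so $B(z_\delta,R)\cap S$ is not a shrinking cap around $y$ but nearly an open hemisphere; the ``internally tangent ball of radius $r$ at $y$'' is $B(x_0,r)$ itself, which is tangent to $S$ at every point and isolates nothing. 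Taking $A=S\setminus B(y,\varepsilon)$ shows the construction genuinely fails there, not merely the estimate.

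The repair is local: in case (iii) perturb the \emph{externally} tangent ball instead, setting $z_\delta=x_0+(r+R-\delta)(y-x_0)/r$. Then $|z_\delta-y|=R-\delta$, so $y\in B(z_\delta,R)$, and now $c=r+R-\delta\to r+R>0$; the same algebra gives $t(\delta)=(r^2+c^2-R^2)/(2rc)\to 1^-$ (indeed $t(\delta)<1$ iff $|r-c|=|R-\delta|<R$), so the cap shrinks to $\{y\}$ and lands inside $B(y,\varepsilon)$ for small $\delta$, uniformly for all $R\le r$ (in fact for all $R>0$). With this replacement the hypothesis $R\le r$ is used only in case (i), where it is genuinely needed: for $A=S$ and $y=x_0$, no open ball of radius $R>r$ containing $x_0$ can miss $S$.
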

Let us look at special sets $E=V$, where  $V$ is the set of  vertices of a simplex $T=\co(V)$ in $\R^d$.

\begin{definition}\label{defcircumradius}
  The following fact is well known: there exists a unique open ball
  $B(c(V),r(V))$ such that $V \subset \pa B(c(V),r(V))$; it is 
  called the \emph{circumball} to $T=\co(V)$. Its radius $r(V)$ is the \emph{circumradius} of $T$
  and its center is the \emph{circumcenter} $c(V)$, denoted below simply by $c$.
  Let us recall that its closure $D(V) := \cl(B(c,r(V)))$ does not coincide (in general) with the
  closed ball of minimum radius containing $V$.
  \end{definition}
  %By definition of circumball of a tetrahedron $T$, the set of vertices $V$ lies on a sphere $ \Sigma=\pa  $ of radius $r(V)$, the circumradius of $T$.

  The following proposition is a consequence of Proposition \ref{r5}, with $r=r(V)$ and $R=\rho$:

  \begin{proposition}\label{corollarycoR(V)}
    Let $T = \co(V)$ be a simplex in $\R^d$. Then
    %\begin{equation*}
    %  \label{r(V)>=R}
      $\co_\rho(V)=V$ for all $\rho \leq  r(V)$.
    %\end{equation*}
  \end{proposition}  
  \begin{definition}(\cite{LMVnew}) \label{defRsupporting} Let $A$ be a closed set in $\R^d$ and $a\in \pa A$.
    Let $v\in \partial B(o,1)=S^{d-1}$.
    We say that the ball $B(a+Rv)$ (of radius $R$) is \emph{$R$-supporting $A$ at $a$} if 
 $A\subset (B(a+Rv))^c$. The sphere $\pa B(a+Rv)$ is called \emph{$R$-supporting $A$ at $a$}.
 \end{definition}

  \subsection{$R$-cones}

  \begin{definition}\label{defconvonS}
    Let $\rho > 0$ and $S = \partial B(o,\rho)$.
    There is a one-to-one correspondence between closed cones $K \subset \RR^d$ and closed subsets
    $\mathcal{K} \subset S \subset \R^d$:
    for any closed cone $K$ of $\RR^d$, let $\mathcal{K} = K \cap S$;
    conversely, for any closed set $\mathcal{K} \subset S$, let
    $K = \{\la v: v\in \mathcal{K}, \la \geq 0 \}$ be the related cone in $\RR^d$.
   \end{definition}
 
  In the previous definition $\partial B(o,\rho)$ can be replaced by a sphere $S$ centered at any
  point $p \in \R^d$ and $K$ by a cone with vertex $p$.
  Let us recall a family of $R$-bodies, called $R$-cones, see \cite[\S 5]{LMV}.
 The $R$-cones are a generalization of convex cones, as the $R$-bodies are a generalization of convex sets.

\begin{definition}\label{curvedconedef}
 Let $\mathcal{K}$ be a body in  $S^{d-1}$. An \emph{$R$-cone with vertex $o$} (or simply \emph{$R$-cone}) is the $R$-body: 
 \begin{equation*}
  \label{defunboundcone}
   C_{\mathcal{K}}:=\bigcap_{v \in \mathcal{K}} (B(Rv,R))^c.
 \end{equation*} 
Similarly, for $x\in \RR^d$, an $R$-cone with vertex $x$ is the $R$-body:
$C_{\mathcal{K}}^x:=x+ C_{\mathcal{K}}=\bigcap_{ v\in \mathcal{K}} (B(x+Rv,R))^c.$
\end{definition}

\begin{proposition}\cite[Theorem 5.6]{LMV}\label{viceversaregular} Let $\mathcal{K}$ be a body in  $S^{d-1}$ 
and $K$ its  related cone in $\RR^d$. Then
 \begin{enumerate}
 \item[$a$)] $\Tan(C_{\mathcal{K}}, o) = -K^\star$;
 %\item[$b$)]  $\Nor(C_{\mathcal{K}})\cap S^{d-1} = \co(\mathcal{K}) \cap S^{d-1}$;
 \item[$b$)] $ \Tan(C_{\mathcal{K}})\setminus \{o\}\subset \inte(C_{\mathcal{K}})$.
 \end{enumerate} 
\end{proposition}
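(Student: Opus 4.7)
The plan is to begin with part (a), exploiting the fact that $x\in C_{\mathcal{K}}$ if and only if $|x-Rv|\geq R$ for every $v\in\mathcal{K}$, equivalently $|x|^2\geq 2R\langle x,v\rangle$ for every $v\in\mathcal{K}$; in particular $o\in\pa C_{\mathcal{K}}$. For the inclusion $\Tan(C_{\mathcal{K}},o)\subset -K^\star$, take a sequence $x_n\in C_{\mathcal{K}}\setminus\{o\}$ with $x_n\to o$ and $x_n/|x_n|\to u\in S^{d-1}$; dividing the inequality by $|x_n|$ and passing to the limit gives $\langle u,v\rangle\leq 0$ for every $v\in\mathcal{K}$, i.e., $u\in -K^\star$. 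Conversely, for $u\in -K^\star\cap S^{d-1}$ the sequence $x_n:=u/n$ satisfies $|x_n|^2-2R\langle x_n,v\rangle\geq |x_n|^2\geq 0$ for every $v\in\mathcal{K}$, so $x_n\in C_{\mathcal{K}}$, exhibiting $u$ as a tangent direction at $o$.

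For part (c), let $y\in\Tan(C_{\mathcal{K}},o)\setminus\{o\}$. By (a), $\langle y,v\rangle\leq 0$ for every $v\in\mathcal{K}$, so
\[
|y-Rv|^2=|y|^2+R^2-2R\langle y,v\rangle\geq |y|^2+R^2>R^2
\]
uniformly in $v\in\mathcal{K}$. Setting $\de:=\sqrt{|y|^2+R^2}-R>0$ one has $|y-Rv|\geq R+\de$ for every $v\in\mathcal{K}$, whence by the triangle inequality $|w-Rv|>R$ for every $w\in B(y,\de)$ and every $v\in\mathcal{K}$. Thus $B(y,\de)\subset C_{\mathcal{K}}$ and $y\in\inte(C_{\mathcal{K}})$.

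Part (b) follows from (a) together with the identity $\Nor(A,q)=-(\Tan(A,q))^\star$ recalled in the preliminaries:
\[
\Nor(C_{\mathcal{K}},o)=-(-K^\star)^\star=(K^\star)^\star.
\]
By the bipolar theorem in $\R^d$, $(K^\star)^\star=\cl(\co(K))$. A short verification from the definitions shows that $\co(K)$ coincides with the positive hull of the compact convex set $\co(\mathcal{K})$, and such a positive hull is closed in finite dimensions. Hence $\Nor(C_{\mathcal{K}},o)=\co(K)$, and intersecting with $S^{d-1}$ yields $\mathcal{N}=\co(K)\cap S^{d-1}=co_{sph}(\mathcal{K})$.

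The main obstacle is in (b): since $K$ is a closed cone but not in general convex, identifying $(K^\star)^\star$ with $\co(K)$, rather than only with $\cl(\co(K))$, requires verifying that the convex conic hull of a compact subset of $S^{d-1}$ is already closed. The other two parts reduce to the quadratic inequality characterizing $C_{\mathcal{K}}$ and a routine continuity-plus-compactness argument using the uniform strict bound $|y-Rv|^2\geq |y|^2+R^2$.
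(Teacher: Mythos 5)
The paper does not actually prove Proposition \ref{viceversaregular}: it is imported verbatim from \cite[Theorem 5.6]{LMV}, so there is no internal proof to compare your argument against. Judged on its own, your treatment of $a$) and $c$) is correct and complete: the characterization $x\in C_{\mathcal{K}}\iff |x|^2\ge 2R\langle x,v\rangle$ for all $v\in\mathcal{K}$ yields both inclusions in $a$), and the uniform bound $|y-Rv|^2\ge |y|^2+R^2$ gives $c$) exactly as you describe. These are also the only two items the present paper uses (in Lemmas \ref{trepalle} and \ref{trepallenelvertice}).

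The genuine gap is in $b$), precisely at the step you flag as the main obstacle: it is \emph{not} true that the convex conic hull of a compact subset of $S^{d-1}$ is closed. Take $d=3$ and $\mathcal{K}=\mathcal{K}_1\cup\{q\}$, where $\mathcal{K}_1=\{v\in S^2: v_3\ge\sqrt{v_1^2+v_2^2}\}$ and $q=(1,0,-1)/\sqrt2$. The related cone is $K=K_1\cup K_2$ with $K_1=\{(x,y,z):z\ge\sqrt{x^2+y^2}\}$ and $K_2$ the ray through $q$, and $\co(K)=\co(K_1\cup K_2)=K_1+K_2$ (the convex hull of a union of convex cones through $o$ is their Minkowski sum). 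For $0<t<1$ set $\la=(1-t^2)/(2t)>0$; then $(-\la,1,\sqrt{\la^2+1})+\la(1,0,-1)=(0,1,t)\in K_1+K_2$, so $(0,1,0)\in\cl(\co(K))$. Yet $(0,1,0)=u+\la(1,0,-1)$ with $u\in K_1$ forces $u=(-\la,1,\la)$ and hence $\la\ge\sqrt{\la^2+1}$, which is impossible; so $(0,1,0)\notin\co(K)$. Thus $\co(K)$ is not closed, and the defective point lies on $S^2$, so one cannot replace $(K^\star)^\star=\cl(\co(K))$ by $\co(K)$ before intersecting with the sphere. Your proof of $b$) therefore needs either an additional hypothesis ensuring closedness (e.g.\ $o\notin\co(\mathcal{K})$, i.e.\ $\mathcal{K}$ contained in an open hemisphere), or the reading of $\co$ in Definition \ref{defconvonS} as the \emph{closed} convex hull, in which case $b$) is an immediate consequence of the bipolar theorem and nothing remains to verify. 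As written, the promised ``short verification'' does not exist.
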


\begin{lemma}\label{trepalle}
  Let $B_j \subset \R^d, j=1,\ldots,d$, be open balls of radius $\rho$, not necessarily distinct. Let $p^* \in \cap_{j=1}^d \pa B_j$; then for every neighborhood $\mathcal{U} $ of $p^*$, one has 
$\inte\large(\mathcal{U}\setminus \cup_{j=1}^d B_j\large) \neq \emptyset.$
\end{lemma}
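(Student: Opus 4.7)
My plan is to translate so that $p^* = o$ and write $B_j = B(c_j, \rho)$. The outward unit normal to $\pa B_j$ at $p^*$ is $n_j := -c_j/\rho$. The strategy is to produce a unit vector $v$ and a small $t>0$ such that $p^* + tv$ lies strictly outside every closed ball $\cl B_j$; since $x \mapsto |x - c_j|^2$ is continuous, this automatically yields an open ball around $p^* + tv$, contained in $\mathcal{U}$ for $t$ small, and disjoint from $\cup_{j=1}^3 B_j$, which proves the claim. The controlling identity is
\begin{equation*}
|p^* + tv - c_j|^2 - \rho^2 \;=\; t^2 + 2\,t\,\rho\,\langle v, n_j\rangle,
\end{equation*}
so I need to choose $v$ so that this is positive for small $t>0$ and for every $j$.

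I would split on whether $o \in K := \conv\{n_1, n_2, n_3\}$. In the non-degenerate case $o \notin K$, strict separation of the singleton $\{o\}$ from the compact convex set $K$ produces a unit vector $v$ and a constant $\delta > 0$ with $\langle v, n_j\rangle \geq \delta$ for every $j$. Then the identity above gives $|p^* + tv - c_j|^2 - \rho^2 \geq t^2 + 2t\rho\delta > 0$ for all $t > 0$, so the linear term alone suffices.

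The degenerate case $o \in K$ is where the first-order argument fails and is the main obstacle. But the hypothesis $o \in \conv\{n_1, n_2, n_3\}$ together with $|n_j| = 1$ forces a nontrivial nonnegative linear dependence $\sum \la_j n_j = o$, so $L := \lin\{n_1, n_2, n_3\}$ has dimension at most $2$. Since $L \neq \R^3$, $L^\perp$ contains a unit vector $v$; with this choice $\langle v, n_j\rangle = 0$ and the identity collapses to $|p^* + tv - c_j|^2 - \rho^2 = t^2 > 0$ for every $t \neq 0$. The second-order term $t^2$ alone is then enough to step off each of the spheres, provided one moves in a direction perpendicular to the common span of the three normals. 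Combining the two cases and invoking continuity of $x \mapsto |x - c_j|^2$ to widen $p^* + tv$ into an open ball produces the required nonempty open subset of $\mathcal{U} \setminus \cup_{j=1}^3 B_j$.
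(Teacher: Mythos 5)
Your argument is correct, and it reaches the same geometric conclusion as the paper by a more elementary, self-contained route. The paper's proof translates $p^*$ to the origin, packages $\bigcap_j (B_j)^c$ as a $\rho$-cone $C_{\mathcal K}$ over the three unit inner normals, and quotes Proposition~\ref{viceversaregular} (i.e.\ Theorem~5.6 of \cite{LMV}) for the two facts that $\Tan(C_{\mathcal K})=-K^\star$ and that every nonzero tangent vector lies in $int(C_{\mathcal K})$; the nontriviality of $K^\star$ is then justified by the observation that any three points of $S^2$ lie in a common closed hemisphere. Your quadratic identity $|p^*+tv-c_j|^2-\rho^2=t^2+2t\rho\langle v,n_j\rangle$ is precisely the computation hidden inside that citation: any unit $v$ with $\langle v,n_j\rangle\ge 0$ for all $j$ escapes all three closed balls thanks to the $t^2$ term, and your dichotomy --- strict separation when $o\notin\mathrm{conv}\{n_1,n_2,n_3\}$, and a direction orthogonal to $\mathrm{span}\{n_1,n_2,n_3\}$ (which is at most two-dimensional) when $o$ does lie in the convex hull --- is a complete elementary proof that such a $v$ exists, amounting to a direct proof of the hemisphere fact. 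The paper's route has the advantage of reusing the $R$-cone formalism that also drives Lemma~\ref{trepallenelvertice} and Theorem~\ref{existpL(V)wellcentered}; yours has the advantage of being independent of \cite{LMV} and fully explicit, including the final widening of $p^*+tv$ into an open ball inside $\mathcal{U}$. I see no gaps.
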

 \begin{proof} Let  $u_j$ be the unit inner normal at $p^*$ to $B_j$. Let
  $\mathcal{K}= \{u_1,u_2,\ldots, u_d\}\subset S^{d-1}$. The closed set $ \cap_{j=1}^d B_j^c$ is called $\rho$-cone with vertex $p^*$ in \cite{LMVnew}. The $\rho$-cone $C_\mathcal{K}=\cap_{j=1}^d B_j^c-p^*$ with vertex at $\{o\}$ is a generalization of the usual convex cone. Let $K=\{\la u_i \,|\, u_i\in \mathcal{K}, \lambda \geq 0\}$ be the related cone to $\mathcal{K}$. By properties $a$) and $b$) of Proposition \ref{viceversaregular}
  \begin{equation}\label{Tan(C_K)inC_K}
   -K^*-\{o\}=\Tan(C_{\mathcal{K}})-\{o\} \subset \inte(C_\mathcal{K})
  \end{equation}
 where $\Tan(C_{\mathcal{K}})$ is the tangent cone to the $\rho$-cone $C_\mathcal{K}$ at $o$. Since $\mathcal{K}$ is contained in a closed hemisphere, the dual cone $K^*$ is not empty and the tangent cone is non empty too. Then, by inclusion \eqref{Tan(C_K)inC_K}, for every $v\neq o$ in the tangent cone:  $v\in \inte(C_\mathcal{K})$. Let $y=p^*+v$, then  $y\in \inte((\cup_{j=1}^d B_j)^c)\neq \emptyset$ and 
 \begin{equation}\label{dist(y,C_K)>0}
 \dist (y,  \cup_{j=1}^d B_j) > 0;
 \end{equation}
  the thesis follows.
 \end{proof}

 \begin{lemma}\label{trepallenelvertice} Let $B_j \subset \R^d, j=1,\ldots,d$, be open balls of radius $\rho$. Let $p^* \in \cap_{j=1}^d \pa B_j$; let $C_\mathcal{K}^{p^*}=\cap_{j=1}^d B_j^c$ be the related $\rho$-cone with vertex $p^*$ and  let  $\mathcal{T}$ be a closed convex 
    cone with vertex $p^*$.  Then 
   \begin{itemize}
   \item[$a$)] if $\Tan(\mathcal{T}, p^*)\cap \Tan(C_\mathcal{K}^{p^*},p^*)\setminus \{o\}\neq \emptyset $
     then,  for every  neighborhood $\mathcal{U} $ of $p^*$, there exists a point $y\in \mathcal{T}\cap \mathcal{U}$ such that
      \begin{equation}\label{dist(y,C_K)c>0}
     \dist(y, (C_\mathcal{K}^{p^*})^c) > 0; 
 \end{equation}

\item[b)]  if $\Tan(\mathcal{T}, p^*)\cap \Tan(C_\mathcal{K}^{p^*},p^*)= \{o\}$, then  
 $p^*$ is an isolated point of $\mathcal{T}\cap C_\mathcal{K}^{p^*}$.
 \end{itemize}
\end{lemma}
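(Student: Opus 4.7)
The plan rests on two tangent-cone identifications. First, since $\mathcal{T}$ is a closed convex cone with vertex $p^*$, its tangent cone there coincides with $\mathcal{T}$ translated to the origin, i.e. $\Tan(\mathcal{T},p^*)=\mathcal{T}-p^*$; in particular, every $v\in\Tan(\mathcal{T},p^*)$ gives $p^*+\la v\in\mathcal{T}$ for all $\la\geq 0$. Second, Proposition \ref{viceversaregular} applies to the $\rho$-cone $C_{\mathcal{K}}$ associated to the three normals at $p^*$: it says that $\Tan(C_{\mathcal{K}}^{p^*},p^*)=\Tan(C_{\mathcal{K}})$ is translation-invariant, and, crucially, that $\Tan(C_{\mathcal{K}})\setminus\{o\}\subset \inte(C_{\mathcal{K}})$. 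So a nonzero common direction in the two tangent cones will automatically produce points of $\mathcal{T}$ that lie deep inside $C_{\mathcal{K}}^{p^*}$, arbitrarily close to $p^*$.

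For part (a), I would pick any $v\neq o$ in $\Tan(\mathcal{T},p^*)\cap\Tan(C_{\mathcal{K}}^{p^*},p^*)$. The first membership gives $p^*+\la v\in\mathcal{T}$ for every $\la\geq 0$. From the second, together with Proposition \ref{viceversaregular}(c), $\la v\in\inte(C_{\mathcal{K}})$ for $\la>0$, so $y_\la:=p^*+\la v\in\inte(C_{\mathcal{K}}^{p^*})$ and in particular $\dist(y_\la,(C_{\mathcal{K}}^{p^*})^c)>0$. Choosing $\la$ small enough that $y_\la\in\mathcal{U}$ yields the required point. This is a refinement of Lemma \ref{trepalle}: that lemma only produces some point near $p^*$ at positive distance from $\cup_j B_j$, whereas here we additionally confine the point to $\mathcal{T}$.

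For part (b), I would argue by contradiction. Suppose $p^*$ is not isolated in $\mathcal{T}\cap C_{\mathcal{K}}^{p^*}$: then there exist $y_n\in \mathcal{T}\cap C_{\mathcal{K}}^{p^*}$, $y_n\neq p^*$, with $y_n\to p^*$. Set $u_n:=(y_n-p^*)/|y_n-p^*|$; by compactness of $S^2$, along a subsequence $u_n\to u$ with $|u|=1$. Since $y_n\in\mathcal{T}$ and $\mathcal{T}$ is a closed convex cone with vertex $p^*$, each $u_n$ belongs to the closed set $\mathcal{T}-p^*$, so $u\in \mathcal{T}-p^*=\Tan(\mathcal{T},p^*)$. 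Using $r_n=1/|y_n-p^*|$ in the very definition of the tangent cone of $A=C_{\mathcal{K}}^{p^*}$ at $p^*$, we likewise obtain $u\in\Tan(C_{\mathcal{K}}^{p^*},p^*)$. Thus $u$ is a common nonzero element of the two tangent cones, contradicting the hypothesis.

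The proof is essentially bookkeeping once the two identifications above are in place; the main technical point is to keep the convexity of $\mathcal{T}$ explicit so as to pass from ``tangent direction'' to ``actual point of $\mathcal{T}$'' in part (a), and from ``sequence in $\mathcal{T}$'' to ``limit direction in $\Tan(\mathcal{T},p^*)$'' in part (b). No Johnson-circle type geometry enters: the three balls $B_j$ intervene only through the interior inclusion of Proposition \ref{viceversaregular}(c).
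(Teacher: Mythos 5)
Your proposal is correct and follows essentially the same route as the paper: part (a) uses the inclusion $\Tan(C_{\mathcal{K}})\setminus\{o\}\subset \inte(C_{\mathcal{K}})$ from Proposition \ref{viceversaregular}(c) applied along a common ray of the two tangent cones, and part (b) is the same compactness/contradiction argument with the normalized sequence $(y_n-p^*)/|y_n-p^*|$. Your write-up is slightly more explicit than the paper's about the identification $\Tan(\mathcal{T},p^*)=\mathcal{T}-p^*$ and about verifying membership in $\Tan(C_{\mathcal{K}}^{p^*},p^*)$ from the definition, but the substance is identical.
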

\begin{proof} It can be assumed that $p^*=o$. In case $a$), let $r$ be a ray starting from $o$ in the cone $\mathcal{T}\cap C_\mathcal{K}$. From \eqref{Tan(C_K)inC_K} for every $y\in r, y\neq o$, the inequality \eqref{dist(y,C_K)>0} holds. Then  for every neighborhood $\mathcal{U} $ of $o$ and for every $y \in r\cap\mathcal{U}, y\neq o $ formula \eqref{dist(y,C_K)c>0} holds.

In case b),  by contradiction let $y_n \to o$, a sequence of points $y_n \in 
\mathcal{T}\cap C_\mathcal{K}$, $y_n\neq o$. Then up to a subsequence 
$v=\lim_n y_n/|y_n|\neq o $ belongs both to $\Tan(\mathcal{T}) = \mathcal{T}$ and
to $\Tan( C_\mathcal{K})$; this is a contradiction.
\end{proof}

\subsection{Planar case}
 Let us recall the following result as consequence  of Johnson's Theorem 
  \cite{johnson} (cf. fig.~\ref{fig1}).

%% COMMENTATO
  \begin{figure}[!ht]
    \begin{center}
      \includegraphics[scale=1]{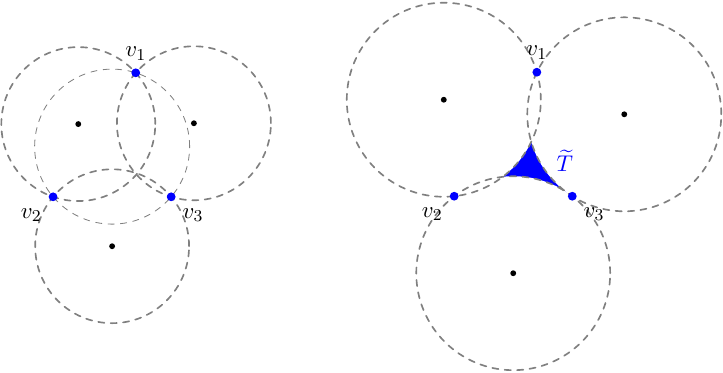}
    \end{center}
    \caption{$\rho$-hulloid of three points in $\R^2$, $\rho=r(V)$ (left) and $\rho>r(V)$ (right).}
    \label{fig1}
  \end{figure}
    
  \begin{proposition}\cite[Theorem 4.2]{LMV}\label{cor(v)inplane}
    Let $V \subset \R^2$ be the set of vertices of a triangle $T$ with circumradius $r(V)$.
    If $\rho >r(V)$, then $$\co_\rho(V)=V\cup \tilde{T},$$
    where  $\tilde{T}\subset T $ is the curvilinear triangle  bordered by three arcs of circles of radius $\rho$, each one through two vertices of $T$. If $T$ is right-angled or obtuse-angled then the vertex of the major angle of $T$ is also a vertex of $\tilde{T}$.
  \end{proposition}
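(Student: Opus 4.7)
The plan is to prove $co_\rho(V)=V\cup \tilde T$ by establishing both inclusions, after an explicit construction of the three $\rho$-supporting balls whose bounding arcs form $\partial \tilde T$.

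Write $V=\{v_1,v_2,v_3\}$ and note that each side satisfies $|v_i-v_j|\leq 2r(V)<2\rho$, so for each pair $\{v_i,v_j\}$ there exist exactly two open balls of radius $\rho$ whose bounding circle passes through $v_i$ and $v_j$. I select the ball $B_{ij}$ whose center lies on the side of the line $\aff(\{v_i,v_j\})$ opposite from the third vertex $v_k$. A short distance computation shows that, under $\rho>r(V)$, one has $v_k\notin \cl(B_{ij})$ (with equality $v_k\in\partial B_{ij}$ occurring precisely in the borderline case $\rho=r(V)$ and angle at $v_k$ right). Hence each $B_{ij}$ is an open $\rho$-ball disjoint from $V$, and I set
\begin{equation*}
\tilde T := T\cap B_{12}^c\cap B_{13}^c\cap B_{23}^c,
\end{equation*}
which is bordered by three arcs of circles of radius $\rho$, each through two vertices of $T$.

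For the inclusion $co_\rho(V)\subseteq V\cup \tilde T$, I show that $V\cup \tilde T$ is itself a $\rho$-body by writing its complement as a union of open $\rho$-balls disjoint from $V$. The three lune regions $T\cap B_{ij}$ are each covered by $B_{ij}$ itself. For $y\in \R^2\setminus T$ with $y\notin V$, I produce an open $\rho$-ball through $y$ disjoint from $V$ by rolling a ball of radius $\rho$ along the nearest edge of $T$ or, for $y$ in the outer normal cone at a vertex, rotating such a ball around that vertex; the other vertices are avoided because $\rho>r(V)$ exceeds the circumradius, in particular any side half-length.

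For the reverse inclusion $V\cup \tilde T\subseteq co_\rho(V)$, I use the elementary reformulation $x\in co_\rho(V) \iff B(x,\rho)\subseteq \bigcup_{i=1}^3 B(v_i,\rho)$. For $x\in \tilde T\setminus V$, any putative $p\in B(x,\rho)$ with $|p-v_i|\geq \rho$ for all $i$ would lie in the region $R := \bigcap_i (B(v_i,\rho))^c$, whose boundary is made of arcs of the three congruent circles $\partial B(v_i,\rho)$ meeting pairwise at the $\rho$-ball centers $c_{ij}$ of $B_{ij}$ (and their mirror images). The three-circle Johnson configuration (\cite{johnson2}) applied to these congruent circles around the vertices then yields the quantitative separation $d(\tilde T, R)\geq \rho$, contradicting $|p-x|<\rho$. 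The statement about right-angled and obtuse-angled triangles follows by direct verification: when the angle at $v_k$ is $\geq \pi/2$, the arcs on $\partial B_{ik}$ and $\partial B_{jk}$ meet precisely at $v_k$, so $v_k$ becomes a vertex of $\tilde T$. The main obstacle I foresee is the separation estimate $d(\tilde T, R)\geq \rho$: this is the geometric heart of the argument and does not follow as a black box from Johnson's theorem, but requires a careful case analysis identifying which arc of $\partial R$ is nearest to a given point of $\tilde T$, handled separately in the acute and non-acute regimes.
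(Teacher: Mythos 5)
Your overall skeleton (construct the three $\rho$-supporting balls $B_{ij}$, prove $co_\rho(V)\subseteq V\cup\tilde T$ by covering $(V\cup\tilde T)^c$ with open $\rho$-balls missing $V$, and reformulate $x\in co_\rho(V)$ as $B(x,\rho)\subseteq\bigcup_{i} B(v_i,\rho)$) is sound, and the first inclusion is essentially complete. Note, however, that the paper offers no proof of this proposition at all --- it is imported verbatim from \cite{LMV}, Theorem 4.2 --- so the only internal point of comparison is the paper's three-dimensional analogue, Theorem \ref{coRdsimplex}, whose hard direction is handled by a variational argument: minimize $|z-y|$ over centers $z$ of $\rho$-balls containing $y$ and missing $V$, then show by perturbation (translation towards $y$, then rotation about the vertices already captured on the sphere) that the minimizing sphere must contain enough vertices, forcing $y\in B_{ij}$ for some pair.

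The genuine gap is the reverse inclusion $\tilde T\subseteq co_\rho(V)$. Your ``quantitative separation'' $d(\tilde T,R)\geq\rho$, with $R=\bigcap_i (B(v_i,\rho))^c$, is --- via your own reformulation --- \emph{exactly equivalent} to that inclusion: $d(x,R)\geq\rho$ iff $B(x,\rho)\cap R=\emptyset$ iff $B(x,\rho)\subseteq\bigcup_i B(v_i,\rho)$. So the goal has been renamed rather than proved, and you then explicitly defer the ``careful case analysis'' that would constitute the actual argument. Moreover, the tool you invoke does not apply in the form you describe: Johnson's theorem concerns three congruent circles through a common point, but for $\rho>r(V)$ the circles $\partial B(v_i,\rho)$ centered at the vertices have no common point (the only point equidistant from $v_1,v_2,v_3$ is the circumcenter, at distance $r(V)<\rho$), and Johnson's theorem itself rules out concurrence of any three radius-$\rho$ circles through the pairs of vertices, since concurrence would force the circumradius of $T$ to equal $\rho$. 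The Johnson configuration is genuinely available only at the critical radius $\rho=r(V)$ (the reflections of the circumcircle in the sides, meeting at the orthocenter); for $\rho>r(V)$ a different mechanism is needed, e.g.\ the planar version of the distance-minimization and rotation argument used in Theorem \ref{coRdsimplex}. As written, the geometric heart of the proof is missing.
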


\section{$R$-hulloids of the vertices of a simplex}\label{cased=3}

Let $V = \{v_i, i=1, \ldots, d+1\} \subset \R^d$ be the set of vertices of a simplex $T=\co(V) \subset \R^d$.
%%%A description of $\co_\rho(V)$ is  the formula \eqref{repcorho}. 
%%%Since Johnson's Theorem does not hold in $\R^3$, the approach is different. 

Let us notice that, by Proposition \ref{corollarycoR(V)}, $\co_\rho(V) = V$ for $\rho \leq r(V)$.

From now on, let us assume $\rho > r(V)$ and let us denote by $c=c(V)$ the circumcenter of $T$, so that $B(c,r(V))$ is the circumball of $T$. 

Let us denote by $V_i=V\setminus \{v_i\}$, for $1\leq i \leq d+1$, and by $T_i=\co(V_i)$
the facets of $T$, by $H_i = \span(T_i)$ and let $c_i,r_i$ be the circumcenter and the circumradius respectively of $T_i$ in $H_i$.

\begin{definition} The simplex $T=co(V)$ will be called \emph{well-centered} if $c(V) \in \inte(T)$ (in  dimension two, the well-centered simplexes are the acute-angled triangles).
\end{definition}

Let $D(V)$ be the closed circumball of $T$. The set $D(V) \cap H_i$ is a  closed circle of radius $r_i$ and center $c_i$. Let $\Om_i$  be its relative interior. 
 
As the vertices in $V$ are affinely independent, then $v_i \not\in H_i$, for all $i$.
Let $H_{i+}$ be the open half space bounded by $H_i$ containing $v_i$, $\mathsf{H}_i^+ = \cl(H_{i+})$ and $l_i$ the line orthogonal to $H_i$ containing $c_i$.

%Let $l_i$ be the line orthogonal to $H_i$ at $c_i$ and let $n_i$ be the unit   exterior normal vector to   $H_i^+$, let $o_i\in l_{i+}$ be  the half line:  
%  \begin{equation}\label{ninotcentered}
%_{i+}= \{c+\la n_i, \la > 0\}\subset l_i.
%\end{equation}

\begin{lemma} \label{lemmagenerale}
  For every $\rho > r(V)$ and for any face $T_i$ of $T=\co(V)$ and line
  $l_i$ orthogonal to $H_i$ at the circumcenter $c_i$ of $T_i$, there
  exists a unique point $o_i(\rho)$ such that
 $V_i \subset \pa B(o_i(\rho), \rho)\cap H_i$
 and
 $v_i\not \in B(o_i(\rho),\rho).$
 Moreover
 $\rho \mapsto \mathsf{H}_i^+   \cap (B(o_i(\rho), \rho))^c$ is a continuous map of strictly nested $\rho$-bodies for $\rho > r(V)$.
 \end{lemma}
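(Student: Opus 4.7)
The plan is to parameterize any sphere of radius $\rho$ through $V_i$ by its center on the orthogonal axis $l_i$, obtain two candidate centers $c_i \pm \sqrt{\rho^2 - r_i^2}\, u_i$, and then use the condition $v_i \notin B$ together with the hypothesis that $v_i$ lies on the circumsphere of $T$ to select exactly one of them. The second statement will then follow by rewriting ball membership as a single inequality whose coefficient is strictly monotone in $\rho$.

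For existence and uniqueness, any point equidistant from the three affinely independent points $V_i \subset H_i$ lies on $l_i$; writing the candidate center as $c_i + t u_i$, where $u_i$ is the unit vector along $l_i$ pointing into $H_{i+}$, one has
\[
|v_j - (c_i + t u_i)|^2 = r_i^2 + t^2 \quad \text{for all } v_j \in V_i,
\]
because $v_j - c_i \in H_i$ is orthogonal to $u_i$ and has norm $r_i$. The inclusion $V_i \subset \pa B(c_i + t u_i, \rho)$ therefore becomes $t = \pm \sqrt{\rho^2 - r_i^2}$, the argument being positive because the intersection of the circumsphere of $T$ with $H_i$ is the circumcircle of $V_i$, so $r_i \leq r(V) < \rho$. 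Writing now $R_i = |v_i - c_i|$ and $h_i = \langle v_i - c_i, u_i \rangle > 0$, the condition $v_i \notin B(c_i + t u_i, \rho)$ expands to $2 t h_i \leq R_i^2 - r_i^2$. Letting the circumcenter $c(V) = c_i + d u_i$ (its projection onto $H_i$ equals $c_i$, since this projection is the circumcenter of $V_i$), the relations $|v_i - c(V)|^2 = r(V)^2$ and $|v_j - c(V)|^2 = r_i^2 + d^2 = r(V)^2$ for $v_j \in V_i$ combine into the key identity $R_i^2 - r_i^2 = 2 d h_i$, so the external condition reduces to $t \leq d$. Since $\sqrt{\rho^2 - r_i^2} > |d| \geq d$ for $\rho > r(V)$, only $t_- = -\sqrt{\rho^2 - r_i^2}$ is admissible, yielding the unique choice $o_i(\rho) = c_i - \sqrt{\rho^2 - r_i^2}\, u_i$.

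For the second claim, continuity of $\rho \mapsto o_i(\rho)$ is immediate from this closed form, hence so is continuity of the family of closed sets. For strict monotonicity I rewrite, for $p \in \mathsf{H}_i^+$,
\[
p \in B(o_i(\rho), \rho) \iff |p - c_i|^2 + 2 \sqrt{\rho^2 - r_i^2}\, h(p) < r_i^2,
\]
with $h(p) = \langle p - c_i, u_i \rangle \geq 0$. Since $\sqrt{\rho^2 - r_i^2}$ is strictly increasing in $\rho$ and the coefficient $h(p)$ is strictly positive off the plane $H_i$, the ball $B(o_i(\rho), \rho) \cap \mathsf{H}_i^+$ strictly shrinks as $\rho$ grows, so its complement in $\mathsf{H}_i^+$ strictly grows. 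Each of these complements is a $\rho$-body as the intersection of two $\rho$-bodies: the convex half-space $\mathsf{H}_i^+$ and the complement of an open ball of radius $\rho$. The main obstacle is the identity $R_i^2 - r_i^2 = 2 d h_i$, which translates the global fact that $v_i$ lies on the circumsphere of $T$ into the clean linear selection criterion $t \leq d$ and thereby forces the unique choice of sign.
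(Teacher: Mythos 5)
Your proof is correct, and it supplies in full the ``elementary geometric arguments'' that the paper's proof only sketches: the paper merely locates $o_i(\rho)$ on the half-line $l_i^*$ after splitting into the well-centered and non-well-centered cases, whereas your identity $R_i^2-r_i^2=2dh_i$ and the closed form $o_i(\rho)=c_i-\sqrt{\rho^2-r_i^2}\,u_i$ handle both cases uniformly and also yield the continuity and strict nesting directly. No gaps; this is the intended argument, carried out explicitly.
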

\begin{proof}
 Let us denote $B_i(\rho) := B(o_i(\rho),\rho)$,
    $B(V) := \inte(D(V)) = B(c,r(V))$ and
    $B_i^+(r(V)) := \lim_{\rho \to r(V)^+} B_i(\rho)$.
% $B_i(\rho):\equiv B(o_i(\rho), \rho)$, for $\rho > r(V)$, and 
 %\begin{equation*}  \label{defBi(rho(V))}
 %\end{equation*}
%\begin{equation*}\label{defBi(rho(V)+)}
%  \begin{aligned}
%    B(V) & := \inte(D(V)) = B(c,r(V)) \\[.5em]
%    B_i^+(r(V)) & := \lim_{\rho \to r(V)^+}B_i(\rho).
%  \end{aligned}
%\end{equation*}
Let us notice that two different cases can occur:
\begin{itemize}
\item[$a$)] $T$ is  well-centered. In this case $\forall i$, $c\in H_{i+}$; $o_i(\rho)$ lies on the half
  line $l_i^*$ of $l_i$, with origin in $\tilde{c_i}$, the symmetric point of $c$ with respect to $H_i$,
  not intersecting $H_i^+$. The half line $l_i^*$ is oriented as the outwards unit vectors $n_i$ to the
  facet $T_i$ of $T$. In this case $B(V) \neq B_i^+(r(V)) = B(\tilde{c_i},r(V))$ and 
 $$\lim_{\rho \to r(V)^+} \mathsf{H}_i^+\cap \big(B_i(\rho)\big)^c = \mathsf{H}_i^+\cap  \big(B_i^+(r(V)\big)^c;$$
\item[$b$)] $T$ is not well-centered: then, there are indices $i$ such that the plane $H_i$ separates $c$ from $v_i$;
  for these indices,  $o_i(\rho) $ lies on the half line $l_i^*$ of $l_i$ starting from $c$ and not  intersecting
  $H_i^+$; the half line $l_i^*$ is again oriented as the outwards unit vectors $n_i$ to the facet $T_i$ of $T$.
  For such indices $i$ it turns out that $B(V) = B_i^+(r(V))$, then
  $$\lim_{\rho \to r(V)^+}    \mathsf{H}_i^+ \cap (B_i(\rho))^c  = \mathsf{H}_i^+\cap (B(V))^c.$$
\end{itemize}
%The proof now follows from elementary geometric arguments.
\end{proof}

\begin{rem}
$$ \bigcap_i \Big(\mathsf{H}_i^+ \cap (B(c,r(V)))^c\Big)= \Big(\bigcap_i \mathsf{H}_i^+\Big)  \bigcap \Big(B(c,r(V))\Big)^c=  T\cap \Big(B(c,r(V))\Big)^c=V.$$
\end{rem}

 \begin{lemma}\label{lemamcentrofuori}
If $\rho > r(V)$,
  then every open ball $B(z, \rho)$, not intersecting $V$ and intersecting $\co(V)$, has the following properties:
\begin{itemize}
\item[a)] $\dist(z,c) > \rho-r(V);$ 
\item[b)] the radical plane $H$ through $\pa B(c,r(V))\cap\pa B(z,\rho)$
separates $z$ from $T$;
\item[c)] $\dist(z, T) > (\rho-r(V))/2$.
\end{itemize}
  \end{lemma}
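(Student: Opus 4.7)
I would handle (a)--(c) in order, with (b) as the conceptual core and (a), (c) feeding into or out of it. For (a), I argue by contradiction: if $\dist(z,c) \leq \rho - r(V)$, then for every $v_i\in V\subset\partial B(c,r(V))$ the triangle inequality gives $|v_i - z| \leq r(V) + \dist(z,c) \leq \rho$. Equality in both steps forces $c$ to lie on the segment from $z$ to $v_i$ with $\dist(z,c) = \rho - r(V)$, which pins $v_i$ to the unique intersection of the ray from $z$ through $c$ with $\partial B(c,r(V))$ on the far side of $c$. At most one of the four vertices can occupy this position, so at least three satisfy $|v_i - z| < \rho$ strictly, contradicting $B(z,\rho) \cap V = \emptyset$.

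For (b), the key device is the affine function
\[
f(x) = |x - c|^2 - |x - z|^2 + \rho^2 - r(V)^2,
\]
whose vanishing locus is the radical plane $H$. A one-line expansion confirms that $f$ is affine in $x$. Evaluating, $f(z) = \dist(z,c)^2 + \rho^2 - r(V)^2 > 0$ since $\rho > r(V)$; and for each vertex $v_i$, combining $|v_i - c| = r(V)$ with the hypothesis $|v_i - z| \geq \rho$ gives $f(v_i) \leq \rho^2 - r(V)^2 - \rho^2 = -r(V)^2 < 0$. Affineness then propagates $f < 0$ to every point of $T = co(V)$, so $H = \{f = 0\}$ strictly separates $z$ from $T$. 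The additional inequality $\dist(z,c) < \rho + r(V)$, coming from $T \subset cl(B(c,r(V)))$ and $B(z,\rho) \cap T \neq \emptyset$, combined with (a) shows that the two spheres genuinely intersect in a circle lying in $H$, justifying the phrasing of the statement.

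For (c), the separation from (b) gives $\dist(z,T) \geq \dist(z,H)$, and a direct calculation with $d = \dist(z,c)$ yields $\dist(z,H) = (d^2 + \rho^2 - r(V)^2)/(2d)$. The desired inequality $\dist(z,H) > (\rho - r(V))/2$ is equivalent, after clearing the denominator $2d>0$, to
\[
d^2 - (\rho - r(V))\,d + (\rho^2 - r(V)^2) > 0,
\]
a quadratic in $d$ whose discriminant $(\rho - r(V))^2 - 4(\rho^2 - r(V)^2) = -(\rho - r(V))(3\rho + 5r(V))$ is negative because $\rho>r(V)$. Hence the quadratic is strictly positive for every real $d$, in particular for our $d$, and (c) follows. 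The main obstacle is the setup in (b): once the affine function is identified and the identities $|v_i - c| = r(V)$ are put to work, (a) is a triangle-inequality argument and (c) is a one-line algebraic check.
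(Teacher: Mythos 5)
Your proof is correct and follows essentially the same route as the paper: the triangle inequality for part (a) (where you are in fact more careful than the paper about where the strict inequality comes from), separation by the radical plane for (b), and a lower bound on $\dist(z,H)$ for (c) --- the paper obtains the latter from (a) by observing that $H$ is closer to $c$ than to $z$ and comparing with the midpoint of $[c,z]$, whereas your discriminant computation is self-contained. One small slip: $f(v_i)=\rho^2-|v_i-z|^2\le 0$, not $\le -r(V)^2$, so when a vertex lies on $\partial B(z,\rho)$ the separation at that vertex is only weak; this is all that is needed for (b) and for the inequality $\dist(z,T)\ge\dist(z,H)$ in (c).
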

   \begin{proof} Since $\inte(T)\subset B(c,r(V))$, the spheres $\pa B(c,r(V))$ and $\pa B(z,\rho)$ have non empty intersection; thus they have a real radical plane $H$.   Since every vertex $v_i$ belongs to $\pa B(c,r(V))\setminus B(z,\rho) $ then
    $\dist (z,c)> \dist (z,v_i)-\dist(v_i,c)> \rho-r(V).$
     Moreover $H$  separates $D(V)\setminus B(z,\rho) $  from $z$, then it separates $V$ from $z$,  this proves $b$). Since $\rho > r(V)$, then $H$ has distance from $c$ less than from $z$, therefore
   $\dist(z,T) > \dist(z,H) > \dist (z, (c+z)/2) >  (\rho-r(V))/2$, 
   and $c$) is proved.
   \end{proof}

   \begin{theorem}\label{coRdsimplex}
     Let $T=\co(V)$ be a simplex in $\R^d$ and let $\rho > r(V)$,
     and $B_i(\rho)$ as in Lemma \ref{lemmagenerale}.
     Then 
 \begin{equation}\label{repcorho}
 \co_\rho(V)= \co(V)\setminus \left( \bigcup_{i =1}^{d+1} B_i(\rho) \right)
 \end{equation}
  and $ r(V) < \rho \mapsto \co_\rho (V)$ is a not decreasing family of closed sets,  with
 \begin{equation}\label{rhotoinfty}
 \lim_{\rho\to +\infty}\co_\rho(V)=V\cup \inte(T).
 \end{equation}
    \end{theorem}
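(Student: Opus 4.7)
The plan is to establish the representation \eqref{repcorho} by a double inclusion, and then read off the monotonicity and the limit \eqref{rhotoinfty}. Set $E(\rho) := T \setminus \bigcup_{i=1}^{4} B_i(\rho)$.

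For the inclusion $co_\rho(V) \subseteq E(\rho)$, I would verify that $E(\rho)$ itself is a $\rho$-body containing $V$ and appeal to the minimality in \Cref{d2}. The containment $V \subseteq E(\rho)$ is immediate: each $v_j \in T$, and for each $i$ either $j=i$ (so $v_i \notin B_i(\rho)$ by \Cref{lemmagenerale}) or $v_j \in V_i \subset \pa B_i(\rho)$, so $v_j$ misses the open ball $B_i(\rho)$. That $E(\rho)$ is a $\rho$-body follows from $E(\rho)^c = T^c \cup \bigcup_i B_i(\rho)$: each $B_i(\rho)$ is already an open ball of radius $\rho$ disjoint from $V$, and every point outside the closed convex set $T$ lies in an open ball of radius $\rho$ disjoint from $T$ by the standard supporting-hyperplane construction.

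The reverse inclusion $E(\rho) \subseteq co_\rho(V)$ is the geometric heart and reduces, via $co_\rho(V)^c = \bigcup\{B(x,\rho) : B(x,\rho) \cap V = \emptyset\}$, to the claim: for every open ball $B = B(x,\rho)$ with $B \cap V = \emptyset$, one has $B \cap T \subseteq \bigcup_i B_i(\rho)$. By \Cref{lemamcentrofuori} the center $x$ lies strictly outside $T$ and the radical plane of $B(c,r(V))$ and $B$ separates $x$ from $T$, so $B \cap T$ is a spherical cap on the far side of this plane. I would then argue by a minimization/deformation scheme: assuming for contradiction a putative $y \in B \cap T$ with $y \notin \bigcup_i B_i(\rho)$, consider the nearest-point problem $\min_{z \in A_\rho} |z-y|$ where $A_\rho := \{z \in \R^3 : |z - v_k| \geq \rho \; \forall k\}$ is the admissible-center set. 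Since $y = \sum_k\lambda_k v_k$ with $V\subset \pa B(c,r(V))$ yields the bound $\min_k |y-v_k|^2 \leq r(V)^2 - |y-c|^2 < \rho^2$, we have $y \notin A_\rho$; combined with $x \in A_\rho$ and $|x-y| < \rho$, an optimizer $x^\star$ lies on $\pa A_\rho$ and satisfies the KKT relation $x^\star - y = \sum_k \mu_k(x^\star - v_k)$ with $\mu_k \geq 0$ supported on the active set $S^\star = \{k : |x^\star - v_k| = \rho\}$. The inequality $\rho > r(V)$ rules out $|S^\star| = 4$ (which would force $x^\star = c$ and $\rho = r(V)$), so $|S^\star| \leq 3$; if $|S^\star| = 3$ then $x^\star$ coincides with $o_j(\rho)$ for $\{j\} = \{1,\ldots,4\} \setminus S^\star$, giving $y \in B_j(\rho)$ and a contradiction. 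The main obstacle is to dispose of the strata $|S^\star| \in \{1,2\}$: I would slide $x^\star$ continuously along the corresponding $2$- or $1$-dimensional face of $\pa A_\rho$, keeping $|x^\star - y|$ non-increasing, until a further sphere $\pa B(v_\ell,\rho)$ becomes tangent to $B(x^\star,\rho)$ and activates a third constraint, thereby reducing to the case $|S^\star| = 3$; the geometric ingredients are the convex-combination description of $y$, the above bound, and the structure of the circle $\pa B(v_k,\rho) \cap \pa B(v_l,\rho)$ through two vertices of $V$.

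Finally, the monotonicity and limit are immediate once \eqref{repcorho} is known. Since $B_i(\rho) \cap \mathsf{H}_i^+$ is strictly decreasing in $\rho$ by \Cref{lemmagenerale} and $T \subset \mathsf{H}_i^+$, the trace $B_i(\rho) \cap T$ shrinks strictly as $\rho$ grows, whence $E(\rho)$ expands; as $\rho \to +\infty$ the cap thickness $\rho - \sqrt{\rho^2 - r_i^2} \to 0$ and $B_i(\rho) \cap T$ collapses onto $T_i \setminus V_i$, so $\bigcup_i B_i(\rho) \cap T \to \pa T \setminus V$ and hence $co_\rho(V) = T \setminus \bigcup_i B_i(\rho) \to T \setminus (\pa T \setminus V) = \inte(T) \cup V$, proving \eqref{rhotoinfty}.
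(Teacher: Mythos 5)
Your overall architecture coincides with the paper's: the easy inclusion is obtained by checking that $T\setminus\bigcup_i B_i(\rho)$ is a $\rho$-body containing $V$ and invoking minimality, and the hard inclusion is reduced to a minimization over admissible centers. Your set $A_\rho$ and the nearest-point problem $\min_{z\in A_\rho}|z-y|$ are exactly the paper's family $\Ga_y$ and quantity $r_y$ in different notation, and your stratification by the active set $S^\star$ is the paper's count of vertices on $\pa B(z^*,\rho)$ (Claims A and B). The treatment of $|S^\star|=4$ and $|S^\star|=3$ is correct, as is the deduction of monotonicity and of \eqref{rhotoinfty} from \eqref{repcorho} (your direct cap-thickness computation is, if anything, more self-contained than the paper's appeal to Perkal).

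The genuine gap is precisely where you flag ``the main obstacle'': the cases $|S^\star|\in\{1,2\}$ are announced but not disposed of. Sliding $x^\star$ along the $1$- or $2$-dimensional stratum of $\pa A_\rho$ ``keeping $|x^\star-y|$ non-increasing until a third constraint activates'' is not automatic: the distance function can have a stationary point in the relative interior of the feasible part of the stratum, and the slide stalls there without ever touching a third sphere. Your own KKT relation detects this configuration — for $|S^\star|=\{k,l\}$ it forces $x^\star$ to be coplanar with $y,v_k,v_l$ and to be the near point to $y$ of the circle of centers $\{z:|z-v_k|=|z-v_l|=\rho\}$ (the paper's point $z^+$) — but nothing in your list of ingredients rules this configuration out. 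This is exactly the step where the paper invokes Lemma \ref{lemamcentrofuori}: since $y\in B(z^+,\rho)\cap T$ and $B(z^+,\rho)\cap V=\emptyset$, the radical plane of $\pa B(z^+,\rho)$ and the circumsphere (a plane through $v_k,v_l$) must separate $z^+$ from $T$, which is incompatible with $z^+$ being the near point of that circle to $y\in int(T)$; the one-active-constraint case is excluded by the same separation argument applied to the radical plane through the single active vertex. Without this (or an equivalent) exclusion of the stationary points, the reduction to $|S^\star|=3$ — and hence the identification of the covering balls with the $B_j(\rho)$ of Lemma \ref{lemmagenerale} — is not established, so the hard inclusion remains unproved.
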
  
\begin{proof}
  First let us notice that  
  $$ G:= \co(V)\setminus \left( \bigcup_{i =1}^{d+1} B_i(\rho) \right)= \bigcap_i \mathsf{H}_i^+ \bigcap_i B_i^c(\rho).$$
  Then $G$ is a $\rho$-body as intersection of $\rho$-bodies, and $G \supset V$, thus 
  $F:= \co_\rho(V) \subset G.$
Let us prove now that $G\subset F$. This is equivalent to prove that
\begin{equation}\label{FcomplinGcompl}
F^c \subset G^c.
\end{equation}

First  $V \subset F$, by definition of $\rho$-hulloid.
Let $y \not \in F$, then $y\not \in V$. If $y\not \in T$, then it is obvious that $y \not\in G$; if $y \in \pa T\setminus V$,  there exists a face $T_i$  such that $y\in T_i\setminus V \subset B_i(\rho)$. Then, $y\in G^c$.

If $y \in \inte(T)$ and $y\not \in F$, then by definition of hulloid there exists an open ball
$\tilde{B}(z,\rho) \ni y$, with $\tilde{B}(z,\rho)\cap V=\emptyset$. Let us consider the family
$$\Ga_y=\{\tilde{B}(x,\rho): \tilde{B}(x,\rho)\ni y, \tilde{B}(x,\rho)\cap V=\emptyset\}.$$
Since $\Ga_y$ is not empty let $$r_y=\inf_z \{|z-y|: y\in B(z,\rho)\in \Ga_y \} .$$ Obviously $r_y< \rho$.
 From Lemma \ref{lemamcentrofuori}, $r_y > 0$;  by continuity argument, $r_y$ is a minimum and there exists $B(z^*, \rho) \in \Ga_y$ such that 
 $|z^*-y|=r_y.$ Assume for the moment that $d=3$. \\
 
 {\em Claim A}:  $\pa B( z^*, \rho)\cap V \neq \emptyset$.
 
 By contradiction if $\pa B( z^*, \rho)\cap V = \emptyset$, moving $z^*$ towards $y$, it is
 possible to get another ball $\tilde{B}(\tilde{x},\rho)\in \Ga_y$  such that 
 $$|\tilde{x}-y| < |z^*-y|= r_y,$$
 which is impossible. Then Claim A is proved and there exists $v_3\in \pa B(z^*,\rho)\cap V$. 
 With a similar argument there exists at least another vertex $v_2 \neq v_3$, $v_2 \in \pa B(z^*, \rho)$, otherwise a rotation of
 $B(z^*,\rho)$ around $v_3$ towards $y$ would decrease the value of $r_y$, which would contradict the definition of $r_y$.
 
 \vspace{0.2cm} 
 
 {\em Claim B}: $\{v_1,v_2,v_3\}\subset  \pa B( z^*, \rho) $.

 First let us notice that, since  $y\in \inte(T)$, it does not belong to the line through $v_2,v_3$; moreover $|z^*-v_3|=|z^*-v_2|$.
 Let $\mathcal{C}$ be the circle with center $(v_2+v_3)/2$, through $z^*$, in the plane orthogonal to the axis $v_2v_3$. Let  $\Lambda$ be the plane through $y,v_2,v_3$ and let $z^+$ be the closest point of $\mathcal{C}\cap \Lambda $ to $y$.

The  function 
 $z \to |z-y|$, such that  $ y\in B(z,\rho)\in \Ga_y $, has minimum at $z^*$, then its restriction to $\mathcal{C}$ has the same minimum value at $z^*$. Since the function $\mathcal{C}\ni z \to |z-y|$, with no  restriction on $z$, attains its minimum on $z^+$ and decreases its value moving $z$ on $\mathcal{C}$ towards $z^*$; then two cases need to be considered:
 \begin{itemize}
 \item[$a$)] $z^*=z^+$;
 \item[$b$)] there exists $v_1\in \pa B(z^*,\rho)\cap V$.
  \end{itemize}
 Case $a$) cannot hold. Indeed, by item $c$) of Lemma \ref{lemamcentrofuori}, the radical plane of the two spheres $\pa B(z^+,\rho)$, $\pa B(c,r(V))$ (containing $v_2,v_3$) separates $z^+$ from $y$.
 
 Then item $b$) holds: in such a case $B(z^*, \rho)$ cannot rotate towards $z^+$, to get  another $B(z,\rho) \in \Ga_y, B(z,\rho)\ni y $, to decrease further the value of $r_y$.  Then  $y $ belongs to the open ball $B(z^*,\rho)$, which coincides with $B_4(\rho)$ of 
 the family of balls defined in Lemma \ref{lemmagenerale}. Therefore $y\in  G^c$. This proves \eqref{FcomplinGcompl} for $d=3$.

A similar argument proves that \eqref{FcomplinGcompl} holds for any $d>3$ and \eqref{repcorho} is proved.
 
 The monotone property  of $\co_\rho(V)$ follows from \eqref{monotonicity}.  
Since, see \cite{Per},
 \begin{equation*}\label{limintcorho}
 \bigcup_{\rho > r(V)}\inte(\co_\rho(V)) = \inte(T),
 \end{equation*}
 then the limit property  \eqref{rhotoinfty} holds.
\end{proof}

\begin{corollary}
Let $B_i(\rho)$ and $B_i^+(r(V))$ be as in Lemma \ref{lemmagenerale}.
If $T$ is not well-centered, then
 \begin{equation}\label{limrhoco(V)r(V)+notcentered}
   \lim_{\rho \to r(V)^+}\co_\rho(V)=V ;
  \end{equation}
  if $T$ is  well-centered, then
  \begin{equation}\label{limrhoco(V)r(V)+centered}
    \lim_{\rho \to r(V)^+}\co(V)\setminus \bigcup_{i =1}^{d+1}  B_i(\rho) =\co(V)\setminus
    \bigcup_{i =1}^{d+1} B_i^+(r(V)).
  \end{equation}
 \end{corollary}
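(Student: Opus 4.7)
\emph{Proof sketch.} My plan is to obtain both identities as consequences of the explicit formula in Theorem \ref{coRdsimplex} combined with the case analysis of Lemma \ref{lemmagenerale}. By \eqref{monotonicity}, the family $\rho \mapsto co_\rho(V)$ is non-decreasing in $\rho$, so the limit as $\rho \to r(V)^+$ coincides with the intersection $\bigcap_{\rho > r(V)} co_\rho(V)$. Moreover the ``cap'' $T \cap B_i(\rho)$ is monotonically non-increasing in $\rho$ (this is precisely the strictly nested, continuous family assertion of Lemma \ref{lemmagenerale}), so that $\bigcup_{\rho > r(V)}(T \cap B_i(\rho)) = T \cap B_i^+(r(V))$.

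The first step is to commute the limit with the finite union over $i$ and the set-difference inside $T$, to obtain
\begin{equation*}
\lim_{\rho \to r(V)^+} co_\rho(V) \,=\, co(V) \setminus \bigcup_{i=1}^4 B_i^+(r(V)).
\end{equation*}
This is precisely \eqref{limrhoco(V)r(V)+centered}, and the identification is valid regardless of whether $T$ is well-centered or not.

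The second step, which yields \eqref{limrhoco(V)r(V)+notcentered}, rests on case $b)$ of Lemma \ref{lemmagenerale}: when $T$ is not well-centered, at least one facet $T_i$ has the property that $H_i$ separates $c$ from $v_i$, and the corresponding limit ball is $B_i^+(r(V)) = B(V) = B(c,r(V))$, i.e.\ the open circumball. Using the remark preceding Lemma \ref{lemamcentrofuori}, which gives $T \cap B(c,r(V))^c = V$, one immediately obtains
\begin{equation*}
co(V) \setminus \bigcup_{i=1}^4 B_i^+(r(V)) \,\subset\, T \setminus B(c,r(V)) \,=\, V,
\end{equation*}
while the reverse inclusion follows from $V \subset co_\rho(V)$ for every $\rho > 0$, which is built into the definition of the $\rho$-hulloid.

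The only genuinely delicate point is the interchange of the limit $\rho \to r(V)^+$ with the union $\bigcup_i$ performed in the first step; this is handled by the monotonicity and continuity of the caps $T \cap B_i(\rho)$ asserted in Lemma \ref{lemmagenerale}. Once this is in place, the remaining arguments are elementary set algebra together with the explicit identification of $B_i^+(r(V))$ in the non-well-centered case.
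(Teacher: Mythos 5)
Your proof is correct and follows essentially the same route as the paper: both arguments rest on the monotonicity and continuity of the nested families from Lemma \ref{lemmagenerale}, on the identification $B_i^+(r(V))=B(c,r(V))$ for an index $i$ whose facet plane separates $c$ from $v_i$, and on the observation that $T\cap (B(c,r(V)))^c=V$. The only difference is one of presentation: you establish \eqref{limrhoco(V)r(V)+centered} for all tetrahedra by commuting the limit with the finite union and then deduce \eqref{limrhoco(V)r(V)+notcentered} as a special case, whereas the paper handles the non-well-centered case directly by noting that one limit ball swallows $co(V)\setminus V$.
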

 \begin{proof} In case $T$ is not well-centered, then there exists $i$ such that 
 $H_i$ separates $c$ from $v_i$, then 
 $$\lim_{ \rho \to r(V)^+}B_i(\rho)=B(c,r(V))\supset \co(V)\setminus V$$
 and \eqref{limrhoco(V)r(V)+notcentered} holds; the limit \eqref{limrhoco(V)r(V)+centered} follows from continuity and monotone properties of the map $\rho \mapsto \co_\rho(V)$.\end{proof}

\section{Critical R-hulloid of the vertices of a tetrahedron}

In this section we focus on the three-dimensional case since there is a connection with
a solved conjecture about unit-sphere-systems in $\R^3$ \cite{HMNT,MT2}.
Let $V$ be the set of vertices of a tetrahedron $T = \co(V) \subset \R^3$.
%We consider configurations of distinct supporting spheres of $V$ of equal radius
%which intersect at a single point of $T$.

%describe particular configurations of spheres defining the $\rho$-hulloid of $V$.
%We assume indeed that the $\rho$-supporting spheres intersect at a single point of $\inte(T)$.
First let us consider a more general question considered also in \cite{NL}.

\subsection{The four-crossing point of a tetrahedron}\label{4crossingpointtetrahedrum}
\begin{definition}\label{def4-crossing}
  Given a tetrahedron $T$ of vertices $V = \{v_i, 1 \leq i \leq 4\}$,
  let $R^L$ be the radius of four distinct spheres $S_j, 1\leq j\leq 4$, satisfying
  the following conditions, see fig.~\ref{fig2}:
  \begin{itemize}
  \item[$a$)] $S_j$ contains the vertices $v_i$, for all $i \neq j$;
  \item[$b$)] the intersection of the four spheres is one point $\{O^L\}$;
  \item[$c$)] $O^L $ does not belong to the circumsphere of $T$.
\end{itemize}  
  If there exist $R^L$ and $O^L$ satisfying the previous conditions, they are called the
  \emph{four-crossing radius} and the \emph{four-crossing point of $T$}, respectively.
\end{definition}

\begin{figure}[!ht]
  \begin{center}
    \includegraphics[scale=1]{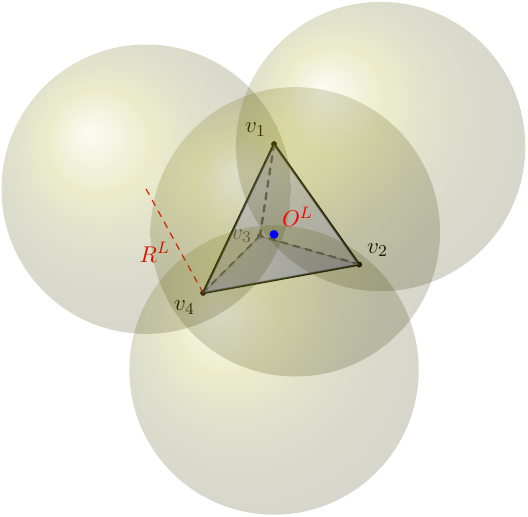}
  \end{center}
  \caption{A four-crossing radius and a four-crossing point of the regular tetrahedron.}
  \label{fig2}
\end{figure}

%In the plane, the definition of three-crossing radius and three-crossing point for a triangle can be given in a similar way. In this case, from Johnson' Theorem one has $R^L = r(V)$, the circumradius of $T = \co(V)$, and $O^L$ is the orthocenter of $T$. In such a case, condition $c$) of Definition \ref{def4-crossing} holds, except for the right triangle for which $O^L$ is a vertex. Then a uniqueness and existence result holds for three-crossing radius and point for all triangles, except for rectangular triangles.

Note that the spheres $S_j$ in Definition \ref{def4-crossing} are not necessarily supporting spheres of $V$.
The method developed in \cite{NL}, based on symbolic computation, allows
to determine the values of $R^L$ and $O^L$ for the family of triangular pyramids.

The following example describes a special pyramid in $\R^3$ with a configuration
of supporting spheres similar to that of the rectangular triangle in the plane.

\begin{example}[{\cite[Example 5]{NL}}]
  \label{example6}
  Let $T = \co(V)$ be the triangular pyramid with apex $v_1$, equidistant 1 from the vertices
  $V \setminus \{v_1\}$ of an equilateral base $T_1 = \co(V \setminus \{v_1\})$, with sides
  length $\sqrt{12/5}$.
  Then there exists a unique set of four distinct spheres $S_j$ of radius $\rho = r(V) = \sqrt{5/4}$,
  the circumradius of $T$, where $S_1$ is the circumsphere of $V$ and $S_2,S_3,S_4$ satisfy the
  following properties:
  \begin{enumerate}
  \item
    $S_j$ contains the vertices $v_i$, for all $i \neq j$;
  \item
    $\cap_{j=2}^4 S_j = \{v_1\}$.
  \end{enumerate}
\end{example}

Let us notice that in this case property $c$) of Definition \ref{def4-crossing} is not satisfied
and therefore, this configuration does not correspond to a four-crossing point of $T$.
The existence of the four-crossing radius $R^L$ and the four-crossing point $O^L$ has some
connections with the unit-sphere-systems in $\R^3$.

\begin{definition}[{\cite{HMNT}}]
  \label{defunitshoeresystem}
  A \emph{unit-sphere-system in $\R^d$} is a set of $d+2$ distinct spheres in $\R^d$
  of equal radius, with empty intersection, and such
  that each subgroup of $d+1$ spheres has a common intersection.
\end{definition}

\begin{rem}\label{unitspheresystem}
  It was proved in \cite[Thm. 8.1]{MT2} that there are no unit-sphere-systems for $d=3$.
  This implies that $R^L$ does not coincide with the circumradius $r(V)$ of a tetrahedron $T=\co(V)$:
  indeed, if this was the case, the set of four spheres $S_j$ supporting $V$, together with the
  circumsphere of $T$, would form a unit-sphere-system in $\R^3$.
\end{rem}

\subsection{$R^*$-hulloid of the vertices of a tetrahedron}

\begin{definition}\label{defhull}
  Let $\rho > 0$. The $\rho$-hulloid of $V$ is called \emph{full} if $\inte(\co_\rho(V)) \neq \emptyset$.
\end{definition}

\begin{definition}\label{defrL(V)} Let us define
  $$
  r_L(V) = \inf \{\rho > r(V): \co_\rho(V) \text{ is full}\}.
  $$ 
\end{definition}
Let us notice that, for $d=2$ and $\rho>0$, Propositions \ref{corollarycoR(V)} and \ref{cor(v)inplane}
  imply that the set $\co_\rho(V)$ is full if and only if $\rho > r(V)$.

  \begin{lemma}\label{remarkdefoi}
    For $\rho > r(V)$, the points $o_i(\rho)$, $1 \leq i \leq 4$,
    defined by Lemma \ref{lemmagenerale}, are the vertices of a simplex $W$.
     \end{lemma}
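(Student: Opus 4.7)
The plan has three main steps. First, I would observe that each axis $l_i$ passes through the circumcenter $c = c(V)$ of $T$: by construction $l_i$ is the locus of points in $\R^3$ equidistant from the three vertices of $T_i$, and $c$ satisfies this condition. Writing $n_i$ for the unit outward normal to the face $T_i$, every $o_i(\rho)$ therefore admits the representation
\[
o_i(\rho) = c + \lambda_i n_i
\]
for some scalar $\lambda_i = \lambda_i(\rho)$.

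Second, I would verify $\lambda_i > 0$ for every $\rho > r(V)$ and every $i$. Applying Pythagoras to the right triangle with vertices $c, c_i, v_j$ (any $j \ne i$) gives $c_i = c + \alpha_i n_i$ with $|\alpha_i| = \sqrt{r(V)^2 - r_i^2}$, while $|o_i - v_j| = \rho$ yields $o_i = c_i + t_i n_i$ with $t_i = \sqrt{\rho^2 - r_i^2}$; hence $\lambda_i = \alpha_i + t_i$. The sign of $\alpha_i$ (positive when $c$ and $v_i$ lie on the same side of $H_i$, negative otherwise) is pinned down by the two branches of Lemma \ref{lemmagenerale}, and in either branch the hypothesis $\rho > r(V)$ forces $t_i > |\alpha_i|$, so that $\lambda_i > 0$.

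The heart of the proof is to rule out coplanarity of the four points $o_i = c + \lambda_i n_i$ under the constraints $\lambda_i > 0$. Suppose for contradiction they lie on a common plane $\Pi$. If $c \in \Pi$, each direction $n_i$ lies in the $2$-dimensional subspace $\Pi - c$, contradicting the elementary fact that any three of the outward face normals of a $3$-simplex are linearly independent. If $c \notin \Pi$, write $\Pi = \{x : \langle x - c, a \rangle = b\}$ with $b \ne 0$; the inclusion $o_i \in \Pi$ gives $\lambda_i \langle n_i, a \rangle = b$ for each $i$, so that the four quantities $\langle n_i, a \rangle$ share a common nonzero sign. But Minkowski's relation
\[
A_1 n_1 + A_2 n_2 + A_3 n_3 + A_4 n_4 = 0,
\]
with $A_i > 0$ the area of face $T_i$, dotted with $a$, yields $\sum_i A_i \langle n_i, a \rangle = 0$: a sum of four strictly same-sign nonzero terms, impossible.

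The main obstacle I anticipate is the sign verification in step 2, which requires unpacking both branches of Lemma \ref{lemmagenerale} (well-centered vs.\ non-well-centered at index $i$). Once the positivity $\lambda_i > 0$ is in hand, the coplanarity contradiction via Minkowski's identity is immediate, and $\{o_1, o_2, o_3, o_4\}$ is affinely independent, i.e., the vertex set of a non-degenerate simplex $W \subset \R^3$.
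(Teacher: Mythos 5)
Your proof is correct, and it shares the paper's setup but diverges at the decisive step. Like the paper, you reduce everything to the representation $o_i(\rho) = c + \lambda_i n_i$ with $\lambda_i > 0$ along the outward facet normals (your Pythagorean bookkeeping $\lambda_i = \alpha_i + t_i$ with $t_i = \sqrt{\rho^2 - r_i^2} > |\alpha_i| = \sqrt{r(V)^2 - r_i^2}$ is exactly what the paper's case analysis in Lemma \ref{lemmagenerale} encodes). Where the paper then concludes by asserting that $W$ contains, up to translation and dilatation, the contact-point simplex $T'$ of the insphere and invokes the vertex-normal duality of Brandts--Krizek, you instead rule out coplanarity directly: if the four points lay in a plane $\Pi$, either $c \in \Pi$ and three of the normals would be linearly dependent (impossible, since three facet hyperplanes of a nondegenerate simplex meet in a single point), or $c \notin \Pi$ and the relations $\lambda_i \langle n_i, a\rangle = b \neq 0$ with $\lambda_i>0$ would force all $\langle n_i,a\rangle$ to have one strict sign, contradicting Minkowski's identity $\sum_i A_i n_i = 0$. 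Your route is self-contained and arguably tighter: the paper's containment claim ($W \supset$ a dilate of $T'$) is itself nontrivial (it requires, e.g., showing $c \in \conv\{o_j\}$, which again rests on a Minkowski-type relation) and is left to the cited reference, whereas your argument needs only the elementary normal identities of the simplex. What the paper's version buys is the explicit link to the vertex-normal duality literature and the geometric picture of $W$ as enclosing a copy of the contact-point simplex, which is stronger information than bare affine independence.
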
 
 \begin{proof} The points $o_i(\rho)$ are distinct. Otherwise if $o_i=o_j$ then $B_i(\rho)=B_j(\rho)$ would be an open ball of radius $\rho$ containing all the vertices $V$ on the boundary in contradiction with assumption $r( V) < \rho$. Moreover, see cases $a$) and $b$) in the proof of Lemma \ref {lemmagenerale},  by construction, each point $o_i$ belongs to the corresponding  half  lines $l_i^*$ contained in the lines $l_i$ through the circumcenter $c$ and orthogonal to the facet $T_i$. All  distances of $o_i(\rho)$ from $c$ are  positive; the
 vectors from $c$ to $c_i$ are outwards normal vectors to the facet $T_i$ of $T$. Then the simplex $W$, with  vertex $o_i(\rho)$, contains, up to a translation and dilatation,   the simplex $T'$ whose vertices are the contact points of the inscribed sphere to $T$; see also \cite{Brand-krizek} for this simplicial vertex-normal duality. 
\end{proof}

 \begin{definition}  Let us denote  $r(\rho)$ the circumradius of the simplex $W$ with
   circumcenter $c(W)$ and vertices $o_i(\rho)$, defined in Lemma \ref{remarkdefoi}.   
 \end{definition}
 
 Let us notice that, by definition of $c(W)$ and $r(\rho)$,
 the intersection of the spherical surfaces $\pa B(o_i(\rho), r(\rho))$ is $c(W)$.
 %%%; these spheres  contain the vertices $V\setminus \{v_i\}$ if $\rho= r(\rho)$.

 \begin{theorem}\label{existpL(V)wellcentered} Let $V$ be the set of vertices of a tetrahedron $T$. 
   There exist $R^* > r(V) $ and a configuration of four distinct $R^*$-supporting spheres of $V$
   intersecting at some $O^*\in \inte(T)$  if and only if  
   \begin{equation}\label{R*>r(V)}
    r_L(V)> r(V).
   \end{equation} In such a case $R^*=r_L(V)$ and
   \begin{equation}\label{repcorl(V)}
      \co_{R^*}(V)=V\cup \{O^*\}.
   \end{equation} 

      \end{theorem}
 \begin{proof}
   If \eqref{R*>r(V)} holds, let us consider, for all $\rho > r(V)$, the centers $o_i(\rho)$  of the balls $B_i(\rho)$,
   $i=1, \ldots,4$, as in \eqref{repcorho}; let $r=r(\rho)$ be the circumradius of $W = \co\{o_1(\rho), \ldots,\allowbreak o_4(\rho)\}$,
   and let $c(W)$ be its circumcenter. Then for every $i$, the map $\rho \mapsto r(\rho) = |c(W) - o_i(\rho)|$ is continuous.

   Let $R^* = r_L(V)$. Then one has
   \begin{equation*}
     %%\label{limrho*+}\label{limcorho*+}
     r(R^*)=\lim_{\rho \to R^*} r(\rho) \,\,\,\,\,\,\,\, \text{ and } \,\,\,\,\,\,\,\, \co_{R^*}(V)=\lim_{\rho \to (R^*)^+} \co_\rho(V).
   \end{equation*}
   Let us prove now that 
   \begin{equation}\label{limrho*-}
     r(R^*)= R^*.
   \end{equation}
 The set 
  $T\setminus \cup_{i =1}^{4} B_i(\rho) $ has non empty interior for $\rho > R^*$, and it is contained in a bounded set. Therefore there exists a sequence  $\rho_n \to (R^*)^+$ and  a point  $p^*$ such that 
  \begin{equation}\label{p*=lim}
  p^*\in \lim_n \inte(T\setminus \cup_{i =1}^{4} B_i(\rho_n)).
  \end{equation}

Three cases could happen:
 \begin{itemize}
  \item[$i$)] $p^*\in \inte(T)$;
  \item[$ii$)] $p^*\in \pa T\setminus V$;
   \item[$iii$)] $p^*\in V$;
\end{itemize}
 
First, let us consider case $i$). Then there exists a neighborhood $\mathcal{U} \subset\inte(T)$ of $p^*$. In case   $p^* \in \pa B_i(R^*)$ for every $i$, then  $p^*$ is the circumcenter $O^*$ of  $W$ and  \eqref{limrho*-} holds.
Otherwise, by contradiction, if  all four spheres do not cross at $p^*$, then  there exists $i$ and  at most three spheres $\pa B_j(R^*), j\neq i$, intersecting at $p^*$. Then, by Lemma \ref{trepalle}, $\tilde{\mathcal{U}}=\mathcal{U}\cap \cap_{j\neq i }(B_j(R^*))^c$ has non empty interior. Since $\dist(p^*,B_i(R^*)) > 0$    then  $ B_i(R^*)^c \cap \tilde{\mathcal{U}}$ has  non empty interior. Then $T\setminus \cup_{j =1}^{4} B_j(R^*)$ has non empty interior too. By continuity, for $\varepsilon > 0$ small enough,  $T\setminus \cup_{j =1}^{4} B_j(R^*_\varepsilon)$ has non empty interior, with $R^*_\varepsilon=R^*-\varepsilon < R^*$. Then $\co_{R^*_\varepsilon}(V)$ is full, in contradiction with the definition of $r_L(V)=R^*$.

Let us prove now that case $ii$) does not occur.
Since  the distance of  $T\setminus \cup_{i =1}^{4} B_i(\rho_n) $ from every $o_i(\rho_n)$ is $\rho_n$, then, by \eqref{p*=lim},
 $\dist( p^*, o_i(R^*)) \geq  R^*, \quad \forall i.$ Obviously $p^*\in T$. Since for every $q$ on the face $T_i\setminus V$,
 $\dist (q ,o_i(R^*)) < R^*$, then $p^*$ does not belong to $T_i\setminus V$, for every $i$.

Let us prove  that case $iii$) does not occur.
By contradiction let  $p^*=v_4 \in V$. If  $p^*\in \pa B_4(R^*)$ then  $\pa B_4(R^*)$ would be the circumsphere of $T$ and $R^*=r(V)$, in contradiction with the assumption \eqref{R*>r(V)}. Then  $p^*\not \in \pa B_4(R^*)$, therefore 
$\dist (p^*, B_4(R^*)) > 0$. 

Let us consider the $R^*$-hulloid of $V$, namely $\co_{R^*}(V)= T \cap _{i =1}^{4} (B_i(R^*))^c.$ Since $\dist(p^*, B_4(R^*)) > 0$, there
exists a neighbourhood $\mathcal{U}$ of $p^*$, such that $$\co_{R^*}(V)\cap \mathcal{U}= \mathcal{U} \cap
 \big(T \cap _{i =1}^{3} B_i(R^*)^c)=  \mathcal{U} \cap \big( T \cap (C_\mathcal{K}^{p^*})\big),$$ 
where  $C_\mathcal{K}^{p^*}$ is the $R^*$-cone of Lemma \ref{trepallenelvertice}, with $\rho=R^*$.

Let us consider Lemma \ref{trepallenelvertice} with  $\mathcal{T}:=\Tan (T, p^*)$.

In case $a$) of Lemma \ref{trepallenelvertice}, for every neighborhood $\mathcal{U}$ of $p^*$, small enough, there exists $y\in \mathcal{T}\cap  \mathcal{U} \subset T$, $y\in \inte( C_\mathcal{K}^{p^*}) $.
By continuity argument there exists $ R_\varepsilon$,  $r(V) < R_\varepsilon < R^*$, such that
$y\in \inte\cap_{i =1}^{4} (B_i(R_\varepsilon))^c.$ Since  every neighborhood of $y\in T$ has non empty intersection with $\inte(T)$,
then $$\inte(T)\cap\inte\big( \cap_{i =1}^{4} (B_i(R_\varepsilon))^c\big) \neq \emptyset.$$
Then $\co_{R_\varepsilon}(V)=T\cap_{i =1}^{4} (B_i(R_\varepsilon))^c$ has non empty interior and it is full. This is impossible
by definition of $r_L(V)=R^*$.

In case $b$) of Lemma \ref{trepallenelvertice} the vertex $p^*$ is an isolated point of
$T\setminus \cup_{i =1}^{4} B_i(R^*)=\co_{R^*}(V)$. This is in contradiction with \eqref{p*=lim}.

% This is impossible since 
% Sia $u\neq 0 $ un elemento di tale cono limite. Allora $v_1+\varepsilon u$ per $\varepsilon >0$ sufficientemente piccolo e' fuori di $B_1(R^*)$ ed  e' interno a $\co_{R^*}(V)$. Cioe' la $\dist(p^*+\varepsilon u, B_j(R^*)) > 0$ per ogni $j$. Ma allora considerando $R_\delta=R^*-\delta < R^*$ per delta sufficientemente piccolo resta ancora 
%$$\dist(p^*+\varepsilon u, B_j(R_\delta)) > 0.$$
%Quindi ancora $p^*+\varepsilon u $ stat nell'interno di $\co_{R_\delta}(V)$. Cio' e' assurdo per la definizione di inf di $R^*=r_L(V)$, vedi definizione  \ref{defrL(V)}.

Then, only case $i$) holds and $p^*=O^*\in \inte(T)$ is the circumcenter of $W$. Let us prove now \eqref{repcorl(V)}.
 
Since $O^* \in \big( \bigcup_{i =1}^{4} B_i(R^*) \big)^c \cap T,$ by \eqref{repcorho}, it follows that $O^*\in \co_{R^*}(V)$: then
\begin{equation}\label{O*incoR(v)}
V\cup \{O^*\} \subset \co_{R^*}(V). 
\end{equation} 
Let us consider the pyramid  $T_i^*$ with apex $O^*$ and opposite face $T_i$. As $O^*\in \pa B(o_i(R^*), R^*)$, then 
 $ T_i^* \subset B(o_i(R^*), R^*)\cup V\cup \{O^*\};$
 thus
 $$\co_{R^*}(V)=T\setminus \bigcup_i B(o_i(R^*), R^*) \subset \left( T\setminus \bigcup T_i^*\right)\cup V \cup \{O^*\}=V\cup \{O^*\}.$$
 This and \eqref{O*incoR(v)} prove \eqref{repcorl(V)} under the condition \eqref{R*>r(V)}.
 
 In case \eqref{R*>r(V)} does not hold, that is if $r_L(V) = r(V)$, then a configuration of four supporting spheres $S_i$
 of radius $r(V)$ intersecting at a point $O^* \in \inte(T)$, $O^*\not\in S$, the circumsphere of $T$, cannot exist.
 By contradiction the collection $\{S , S_i, i=1,\ldots,4\}$ would be a unit-sphere-system in $\R^3$. This is in contradiction
 with \cite[Thm.8.1]{MT2}, see Remark \ref{unitspheresystem}.
 \end{proof}
 
 \begin{corollary}\label{finale su T well centered}
   Let $V$ be  the set of vertices of a tetrahedron $T=T(V)$ in $\R^3$,  and  let $R^*=r_{L}(V)$.  Then 
   \begin{eqnarray*}\label{rho<rLV} 
     \co_\rho(V) = & V  & \mbox{\quad for \quad }  \rho < R^* ; \\
     \label{co_R(V)=}
     \co_\rho(V)  =& V\cup \tilde{\Gamma} & \mbox{\quad for \quad }   R^* < \rho,
   \end{eqnarray*}
   where  
   \begin{equation*}\label{W=}
     \tilde{\Gamma}= \inte(T)\setminus  \bigcup_i B_i(\rho)
   \end{equation*}
   is  a connected, not empty set, 
   with $\pa \tilde{\Gamma}$ \rev{the union} of connected subsets of $\pa B_i(\rho)$.
   
For $\rho=R^*$ it holds
$$\co_{R^*}(V)= V \cup \{O^*\} \mbox{\quad if \quad } r_L(V) > r(V),$$
$$\co_{R^*}(V)= V \mbox{\quad if \quad } r_L(V) = r(V).$$ 
 \end{corollary}
  
 \begin{theorem}\label{wellcenteredrL(V)}If $T$ is well-centered then \eqref{R*>r(V)} and \eqref{repcorl(V)} hold. 
 \end{theorem}
 \begin{proof}
   By contradiction, if \eqref{R*>r(V)} does not hold, then  $r_L(V)=r(V)$ and for all $\rho_n > r(V)$ the set 
 $T\setminus \cup_{i =1}^{4} B_i(\rho)$ has non empty interior.
   Arguing as in the proof of Theorem \ref{existpL(V)wellcentered}, there exists a point $p^*$ satisfying \eqref{p*=lim}. Since  \eqref{R*>r(V)} does not hold, Theorem \ref{existpL(V)wellcentered} implies that $i)$ can not occur; the same argument in the proof of Theorem \ref{existpL(V)wellcentered} proves that case $ii)$ cannot occur; since $T$ is well-centered, every vertex $v_i$ has a positive distance greater than $\delta(T) > 0$ from all balls $B(o_i(\rho_n), \rho_n)$. Then, $p^*$ cannot be a vertex of $V$. Therefore $iii)$ cannot occur, too.
   This is impossible.  \end{proof}  

 \begin{rem}\label{remarkO*notparticular}
   For the trirectangular pyramid, see \cite[Example 4]{NL},  $O^*$ differs in general from the
   orthocenter, and hence from the Monge point, of some  triangular pyramid. It differs also from
   the circumcenter. In this example, condition \eqref{R*>r(V)} holds; therefore since the trirectangular pyramid is not well-centered this implies that condition
   \eqref{R*>r(V)} is not equivalent to the condition that $T$ is well-centered. Note that in the plane the two conditions are equivalent.
\end{rem}

 Let us notice that the value $R^*=r_L(V)$, satisfying \eqref{R*>r(V)} and the point $O^*$, satisfying \eqref{repcorl(V)} in Theorem
\ref{existpL(V)wellcentered}, are a four-crossing radius and a four-crossing point respectively
of $T$, since  $O^* \in \inte(T)\subset B(V)$.
%%%; under this condition on $O^L$ the following uniqueness result is obtained.

\begin{theorem}\label{uniquenessRLOL}
  Let $(R^L,O^L)$ be a four-crossing radius and a four-crossing point for a simplex
  $T = \co(V)$. If $O^L\in \inte(T)$, then $R^L$ and $O^L$ are  uniquely determined.
\end{theorem}
\begin{proof}
  %Since $O^L \in \inte(T)$ every sphere $S_j=\pa B_j$ containing $V\setminus \{v_j\}$ and $O^L$ is uniquely determined.
  Since $O^L\in \inte(T)$ then $v_j\not \in \cl(B_j)$; by Lemma \ref{lemmagenerale} the family 
  $\rho \mapsto \mathsf{H}_j^+   \cap (B(o_j(\rho), \rho))^c$
  is strictly nested. Then,  $B_j$ has radius $R^L$  greater than $r(V)$.  Therefore $S_j$ is the boundary
  of an open ball $B_j$, which is $R^L$-supporting $V$ at all vertices $v_i$ with $i\neq j$.
  Then $R^L=r_L(V)$ of  Definition \ref{defrL(V)} and satisfies  \eqref{R*>r(V)} of Theorem \ref{existpL(V)wellcentered}. This implies that $R^L=R^*$ and $O^L=O^*$ defined by \eqref{repcorl(V)}.
\end{proof}

\begin{rem}\label{remark7points}
  Without the condition $O^L\in \inte(T)$, the four-crossing radius and point of a tetrahedron
  are not uniquely determined, as proved in \cite{NL} for the class of triangular pyramids.
  For a regular tetrahedron there are seven different four-crossing points, one of which
  is the center of the tetrahedron, see Figure \ref{fig2} and \cite[Example 1]{NL}.
\end{rem}

\section*{Conclusions}
The $\rho$-hulloid $\co_\rho(E)$ of a body $E$ is a generalization of the convex hull of $E$, in which
the support hyperplanes of $E$ (resp. open half spaces not intersecting $E$), important tools of convex
analysis, are replaced by spheres of equal radius $\rho$ supporting $E$ (resp. by open balls not
intersecting $E$).

This paper provides a representation formula of $\co_\rho(V)$ for $V$ the set of vertices of a tetrahedron
$T \subset \R^3$. This result generalizes a similar formula in $\R^2$ that derives from the three circles
Johnson’s Theorem \cite{johnson}, that cannot be applied in dimension $d>2$. It is explicitly proved here
in $\R^3$ for connections with some applications (for instance in graphical simulation of molecular models
\cite{quan2016mathematical} or in image analysis \cite{Cue}) and with a recent result on the non-existence
of unit-sphere-systems in $\R^3$ \cite{HMNT,MT2}.

Based on continuity arguments on the family of spheres
of radius $\rho$ supporting $V$, the construction of four spheres of radius $R^*$, supporting $V$ and
intersecting at a point $O^*$, is obtained for all tetrahedra $T$.

In this paper it is also proved that for every $T$, there exists $R^*$, greater or equal to the circumradius
of $T$, such that the set $\co_\rho(V)$ has non empty interior if and only if $\rho > R^*$; for $\rho > R^*$
the boundary of $\co_\rho(V)$ consists of four spherical subsets of four spheres of radius $R^*$.
For $\rho = R^*$ the set $\co_\rho(V) \setminus V$ collapses in a point $O^*$ of $T$. The value of $R^*$ cannot
attain the value of the circumradius of $T$ when $T$ is a well-centered tetrahedron. This is related to the above-mentioned
results \cite{HMNT,MT2} and to explicit algebraic computations of $R^*$ and $O^*$ obtained for the class of
triangular pyramids in \cite{NL}.

\paragraph{Acknowledgments}
This work has been partially supported by IN\-DAM-GN\-AM\-PA (\rev{2025}) \rev{and INDAM-GNSAGA (2025)}.
The second author is supported by the ANR Project ANR-21-CE48-0006-01 “HYPERSPACE”.


\begin{thebibliography}{99}

\bibitem{Art}
  S.~Artstein-Avidan, A.~Chor, and D.~Florentin.
  \emph{A full classification of the isometries of the class of ball-bodies}. 
  \rev{Bulletin of the London Mathematical Society 57.12 (2025): 3691-3698.}

  
\bibitem{Bez}
  K. Bezdek, Z. L\'angi, and M. Nasz\'odi.
  {\em Selected topics from the theory of intersections of balls}.
  \rev{Discrete Applied Mathematics 382 (2026): 60-82.}
  %%\href{https://arxiv.org/abs/2411.10302}{arxiv/2411.10302} (2024).
  
\bibitem{Brand-krizek}
  J.~Brandts, and M.~Krizek.
  {\em Simplicial vertex-normal duality with applications to well-centered simplices}.
  In: Radu, F.A., et al. (eds.) Proceedings of the 12th European Conference on Numerical
  Mathematics and Advanced Applications, ENUMATH 2017, vols, pp. 761–768.
  Springer, Berlin (2019).
   
\bibitem{Cue}
  A.~Cuevas, R.~Fraiman, and B.~Pateiro-L\'opez.
  {\em On statistical properties of sets fulfilling rolling-type conditions}.
  Adv. App. Probab. (S.G.S.A.) 44:311-329 (2012).

\bibitem{russi}
  V. P.~Golubyatnikov and V. Yu.~Rovenski.
  {\em Some extensions of the class of $k$-convex bodies}. Sib. Math. J. 60(5):820-829 (2009).

\bibitem{johnson}
  R.~A.~Johnson.
  {\em A Circle Theorem}.
  Amer. Math. Monthly 23:161-162 (1916).

\bibitem{johnson2}
  R.~A.~Johnson.
  {\em Advanced Euclidean Geometry}.
  Dover Publication, 1960.

%\bibitem{lee1971interpretation}
%  B.~Lee, and F. M.~Richards.
%  \emph{The interpretation of protein structures: estimation of static accessibility}.
%  J. Molecular Biology, 55(3):379--IN4 (1971).

\bibitem{Kab}
  Z.~Kabluchko, A.~Marynych, and I.~Molchanov.
  {\em Generalized convexity with respect to families of affine maps}.
  \rev{Israel Journal of Mathematics (2025): 1-45.}

\bibitem{LMV}
  M.~Longinetti, P.~Manselli and A.~Venturi.
  \emph{On the complements of union of open balls of fixed radius in the Euclidean space}.
  Le Matematiche 1:3-22 (2023).
  
\bibitem{LMVnew}
  M.~Longinetti, P.~Manselli and A.~Venturi.
  {\em On the complements of union of open ball and related set classes}.
  \href{https://arxiv.org/abs/2406.15427}{arxiv/2406.15427}.
  \rev{Le Matematiche (2026, in press).}

\bibitem{NL}
  M.~Longinetti and S.~Naldi.
  \emph{On the configurations of four spheres supporting the vertices of a tetrahedron}. 
  \href{https://arxiv.org/abs/2405.16167}{arxiv/2405.16167}. Submitted.

\bibitem{HMNT}
  H.~Maehara and N.~Tokushige.
  \emph{On a special arrangement of spheres}.
  Ryukyu Math. J. 19:15-24 (2006).

\bibitem{MT2}
  H.~Maehara and N.~Tokushige.
  \emph{From line-systems to sphere-systems -- Schläfli’s double six, Lie’s line-sphere
  transformation, and Grace’s theorem}.
  European Journal of Combinatorics, 30(5), 1337-1351 (2009).

\bibitem{Lop}
 B. Pateiro L\'opez.
 \emph{Set estimation under convexity type restrictions}.
 Doctoral thesis, University of Santiago de Compostela (2008).

\bibitem{Per}
  J.~Perkal.
  \emph{Sur les ensembles $ \epsilon$-convexes}.
  Colloq. Math. Comm. vol IV (1956), Fasc 1, 1-10.

\bibitem{quan2016mathematical}
  C.~Quan and B.~Stamm.
  \emph{Mathematical analysis and calculation of molecular surfaces}.
  J. Comput. Physics. 322, pp. 760--782 (2016).

\bibitem{Sch}
  C.~Sch\"utt, E. M.~Werner, and D.~Yalikum.
  \emph{Floating bodies for ball-convex bodies}. 
  \rev{Journal of the London Mathematical Society 112.6 (2025).}

  
%\bibitem{richards1974interpretation}
%  F. M.~Richards.
%  \emph{The interpretation of protein structures: total volume, group volume distributions and packing density}.
%  J. Molecular Biology, 82(1):1--14 (1974).
\end{thebibliography}
\end{document}